\def\R{\mathbb{R}}
\def\Z{\mathbb{Z}}
\def\mS{\mathbb{S}}
\def\N{\mathbb{N}}
\def\M{\mathcal{M}}
\def\F{\mathcal{F}}
\def\Isom{{\operatorname{Isom}}}
\def\Aff{\operatorname{Aff}}
\def\Sim{\operatorname{Sim}}
\def\Hol{{\operatorname{Hol}}}
\def\Euc{{\operatorname{Euc}}}
\def\Heis{{\operatorname{Heis}}}
\def\O{{\operatorname{O}}}
\def\SO{{\operatorname{SO}}}
\def\SL{{\operatorname{SL}}}
\def\Stab{{\operatorname{Stab_1}} }
\def\Stab{{\operatorname{Stab}} }
\def\dS{{\operatorname{dS}} }
\def\AdS{{\operatorname{AdS}} }
\def\Id{{\operatorname{Id}} }
\def\det{{\operatorname{det}}}
\newcommand{\Ad}{\mathrm{Ad}}
\def\Heis{{\operatorname{Heis}}}
\def\Sol{{\operatorname{Sol}}}
\def\M{{\widetilde{M}}}
\def\CW{\operatorname{CW}}
\newtheorem{theorem}{{Theorem}}[section]
\newtheorem{pr}[theorem]{{Proposition}}%[section]
\newtheorem{isom.ext}[theorem]{{Trivial isometric extension}}%[section]
\newtheorem{lemma}[theorem]{{Lemma}}%[section]
\newtheorem{cor}[theorem]{{Corollary}}%[section]
\newtheorem{fact}[theorem]{{Fact}}%[section]
\theoremstyle{definition}
\newtheorem{defi}[theorem]{{Definition}}%[section]]
\newtheorem{remark}[theorem]{{Remark}}%[section]
\newtheorem{example}[theorem]{{Example}}%[section]
\newtheorem{claim}[theorem]{{\sc Claim}}%[section]
\newtheorem{notation}[theorem]{Notation}%[section]
\definecolor{purple}{rgb}{0.65,0.12,0.94}
\definecolor{forestgreen}{rgb}{0.4,0.64,0.13}
\title[]{Completeness of compact Locally symmetric Lorentz manifolds}
\author [S. Allout]{Souheib Allout}
\address{Fakult\"at f\"ur Mathematik,  Universit\"at zu K\"oln,  \hfill\hfill\break\indent K\"oln, Germany}
\email{allout@math.uni-koeln.de}
\author [M. Hanounah]{Malek Hanounah}
\address{Institut f\"ur Mathematik und Informatik, Greifswald Universit\"at, \hfill\hfill\break\indent Greifswald, Germany}
\email{malek.hanounah@uni-greifswald.de}
\begin{document}

\maketitle	

\begin{center}
\begin{abstract}
The geodesic completeness of compact locally symmetric Lorentz manifolds has been established in several important cases, namely, the constant curvature, indecomposable, and Brinkmann settings. In this paper, we prove geodesic completeness in all remaining cases, thereby confirming the conjecture that all compact, locally symmetric Lorentz manifolds are geodesically complete. Along the way, we use the completeness result  to give a comprehensive overview on four-dimensional compact locally symmetric Lorentz manifolds.
\medskip

\noindent\textbf{Mathematics Subject Classification:} 53C12, 53C50, 53C22.
\end{abstract}
\end{center}
\tableofcontents

\section{Introduction}

A compact homogeneous semi-Riemannian manifold is geodesically complete \cite{marsden1973completeness}. However, local homogeneity alone does not guarantee completeness. Concrete examples are the projections of left-invariant metrics on compact quotients $\Gamma\backslash\SL(2,\R)$ where the Lorentz incomplete ones constitute an open cone in the space of such metrics \cite{GuediriLafontaine}. Beyond only the homogeneity condition, there is a locally homogeneous ``almost symmetric'' compact Lorentz manifold which is geodesically incomplete! Such an  example is given by a compact quotient of an incomplete homogeneous \textit{plane wave}, see \cite[Example 4.4]{hanounah2025completeness}. The almost symmetric property is seen on the level of the curvature tensor, namely, for a plane wave one has $\nabla_XR=0$ for all $X$ tangent to a codimension-one totally geodesic flat lightlike distribution ($R$ denotes the Riemann curvature tensor). So in this sense, one can see the completeness of compact locally symmetric Lorentz manifolds as a ``sharp'' result.

One of the main contributions to the completeness problem of locally symmetric spaces, was by Carri\`ere \cite{carriere1989autour}, showing that compact flat Lorentz manifolds are geodesically complete. Later on, Klingler \cite{klingler1996completude} generalized this result to the constant curvature case.

On the other hand, non-constant sectional curvature indecomposable Lorentz symmetric spaces are known as \textit{Cahen--Wallach spaces}. They have a distinguished geometry that differs from that of constant curvature. In particular, they admit a globally defined (unique up to scale) lightlike parallel vector field. Compact manifolds locally isometric to a Cahen--Wallach space are shown to be complete relatively recently by  Leistner and Schliebner \cite{leistner2016completeness}. In fact, they showed the geodesic completeness for the larger class of compact \textit{pp-waves} (i.e. compact Lorentz manifolds that admit a parallel lightlike vector field $V$ whose orthogonal distribution $V^\perp$ is \textit{flat}). The class of pp-waves fits in the larger class of \textit{Brinkmann spacetimes}, which are defined by the existence of a parallel lightlike vector field, dropping the flatness assumption on the orthogonal distribution. Recently,  Mehidi and Zeghib \cite{mehidi2026completeness} extended the completeness results to this larger class of spacetimes.
 
Despite all the aforementioned completeness results, the general case of locally symmetric Lorentz manifolds remained open. Very recently, Leistner and Munn \cite{Leistnercompletness} observed that the completeness result in \cite{mehidi2026completeness} can be applied to establish the completeness of compact locally symmetric Lorentz manifolds \( M \), whose indecomposable Lorentz factor, in the (local) de Rham--Wu decomposition, is either a Cahen--Wallach space or whose maximal flat factor is isometric to \( (\mathbb{R}, -dt^2) \) (see Sect. \ref{Section: symmetric} for the precise definition).\\

Our main result is the following theorem.
 
\begin{theorem}\label{Intro: the completeness thm}
    A compact locally symmetric Lorentz manifold is geodesically complete.
\end{theorem}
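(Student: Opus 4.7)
My plan is to reduce the theorem to partial results already in the literature by analyzing the local de Rham--Wu decomposition of the universal cover and producing, in every remaining case, a parallel causal vector field on $M$ to which a known completeness result can be applied.

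First, lift to the universal cover $\tilde{M}$ with deck group $\Gamma = \pi_1(M)$. Since $\nabla R = 0$, Wu's pseudo-Riemannian decomposition gives a parallel orthogonal splitting of $T\tilde{M}$, locally integrable to an isometric product
\[
\tilde{M} \;\cong_{\mathrm{loc}}\; \tilde{M}_0 \times \tilde{M}_L \times \tilde{M}_R,
\]
where $\tilde{M}_0$ is the maximal flat factor, $\tilde{M}_L$ (if present) is the indecomposable non-flat Lorentz factor---thus $\dS$, $\AdS$, or Cahen--Wallach---and $\tilde{M}_R$ is a product of non-flat Riemannian symmetric spaces, which are automatically complete. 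The group $\Gamma$ acts by isometries preserving the associated parallel distributions but may mix the factors.

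Next, I would split into cases by the type of $\tilde{M}_L$. The cases already handled by Leistner--Munn \cite{Leistnercompletness} are when $\tilde{M}_L$ is Cahen--Wallach and when $\tilde{M}_L$ is absent and the Lorentz direction in $\tilde{M}_0$ equals $(\mathbb{R}, -dt^2)$. The remaining cases are: (i) $\tilde{M}_L$ is of constant non-zero curvature ($\dS$ or $\AdS$); and (ii) $\tilde{M}_L$ is absent but the Lorentz part of $\tilde{M}_0$ is a Minkowski factor $\mathbb{R}^{1,p}$ with $p \geq 1$. For (i), one expects the $\dS$ subcase to be ruled out by an obstruction analogous to the Calabi--Markus phenomenon, while the $\AdS$ subcase should reduce to Klingler's completeness theorem \cite{klingler1996completude}, for instance by slicing $M$ along the parallel Riemannian directions tangent to $\tilde{M}_R$ and viewing $M$ as a locally trivial family of locally $\AdS$ manifolds. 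For (ii), the strategy is to construct a $\Gamma$-invariant null direction in the flat Minkowski block $\mathbb{R}^{1,p}$, which then descends to a parallel null vector field on $M$ (possibly after a finite cover) and reduces the problem to Mehidi--Zeghib's theorem \cite{mehidi2022completeness}.

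The main obstacle will be case (ii). The $\Gamma$-action on the space of parallel null vectors in $\mathbb{R}^{1,p}$ is, a priori, an arbitrary linear action preserving the Lorentz form, so without additional input one cannot hope to find a fixed null direction. The needed input should come from the compactness of $M$: following the dynamical philosophy of Carri\`ere \cite{carriere1989autour}, properness of $\Gamma$ on $\tilde{M}$ together with compactness of the quotient should restrict the possible $\Gamma$-closures in $\O(1,p)$, eventually forcing a common fixed line in the light cone. Adapting this ``discompacity'' argument to the presence of the Riemannian factor $\tilde{M}_R$, and ensuring that the fixed null line descends faithfully to $M$, is where I expect the technical core of the proof to lie.
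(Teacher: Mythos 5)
Your case division via the de Rham--Wu decomposition matches the paper's, and your treatment of the Cahen--Wallach, timelike, and de Sitter (Calabi--Markus) cases is on the right track. However, the two hardest cases are exactly where your plan has genuine gaps.

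For the $\AdS$ case, ``slicing $M$ into a locally trivial family of locally $\AdS$ manifolds and applying Klingler'' does not work: the $\AdS$ leaves need not be compact, and Klingler's theorem applies only to compact constant-curvature manifolds. The paper's Example 4.2 exhibits a compact manifold foliated by \emph{incomplete} anti-de Sitter leaves, so any purely leafwise argument fails. What is actually needed (and what the paper does, following Frances) is to first establish the global product structure $\widetilde{M}\cong\widehat{\Omega}\times Y$, then run the Carri\`ere--Klingler discompacity argument \emph{adapted to the product} to show $\widehat{\Omega}$ embeds as a convex domain in $X_\kappa$, and finally exclude proper convex domains using a complete vector field of nonzero constant divergence (or an unbounded invariant function), contradicting compactness.

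For the flat Lorentz factor, your strategy of producing a $\Gamma$-invariant null direction and invoking Mehidi--Zeghib has a fatal flaw: an invariant null \emph{line} is not an invariant null \emph{vector}. In the problematic scenario the paper must exclude --- $M$ a quotient of a half-Minkowski space $C_n\times Y$ bounded by a lightlike hyperplane --- the holonomy lies in the lightlike Poincar\'e group $\operatorname{LP}(n)\cong(\R\times\O(n))\ltimes\Heis_{2n+1}$, whose linear part rescales the distinguished null direction by $\lambda>0$. The $\R$-factor of scalings is not torsion, so no finite cover kills it, and one obtains only a parallel null line field (a recurrent vector field), not a parallel null vector field. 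Mehidi--Zeghib requires the latter; indeed the paper's introduction cites a compact \emph{incomplete} homogeneous plane wave, showing a parallel null line field alone does not imply completeness. This is precisely why the paper replaces your step with a substantial algebraic non-existence theorem (Theorem 5.3): using Auslander's theorem, passage to the amenable radical, syndetic hulls of the resulting virtually polycyclic group, and a non-unimodularity computation (Lemma 5.10), it shows no discrete $\Gamma\subset\operatorname{LP}(n)\times G$ can act properly cocompactly on $C_n\times G$. Your ``discompacity forces a fixed null line'' heuristic, even if carried out, would land you exactly at the starting point of this argument, not at its conclusion.
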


\subsection{Calabi--Markus phenomenon} The celebrated Calabi--Markus phenomenon states that de Sitter spacetimes do not admit compact Clifford--Klein space forms. The completeness result of Klingler \cite{klingler1996completude} implies then the non-existence of compact Lorentz manifolds with positive constant sectional curvature. This naturally raises the question of whether this phenomenon extends to the product setting. Specifically, one may inquire whether there exist compact locally symmetric Lorentz manifolds for which the de Sitter space is the indecomposable Lorentz factor.
\begin{pr}\label{Calabi--Markus thm} Let $M$ be a compact locally symmetric Lorentz manifold. Then the indecomposable Lorentz factor, in the de Rham--Wu decomposition of its symmetric model, is not a de Sitter space $\dS^{1,n}$, $n\geq 2$.    
\end{pr}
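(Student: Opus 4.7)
By Theorem~\ref{Intro: the completeness thm}, $M$ is geodesically complete. Hence its universal cover $\tilde M$ is a simply connected Lorentz symmetric space and admits the de Rham--Wu decomposition $\tilde M = \tilde R \times L$ with $\tilde R$ Riemannian and $L$ the indecomposable Lorentz factor. Suppose for contradiction that $L = \dS^{1, n}$ with $n \geq 2$. Since an isometry of a simply connected symmetric space permutes the de Rham--Wu factors preserving their signature, $\Gamma := \pi_1(M)$ embeds (after passing to a finite cover of $M$) as a discrete subgroup of $G_1 \times G_2 := \Isom_0(\tilde R) \times \SO_0(1, n+1)$ that acts freely, properly discontinuously, and cocompactly on $\tilde R \times \dS^{1, n}$.

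The plan is to reduce to the classical Calabi--Markus obstruction for $\dS^{1,n}$. Let $p : \Gamma \to G_2$ be the second projection, $\Gamma_2 := p(\Gamma)$, and $H := \overline{\Gamma_2}$ its closure in $G_2$. Cocompactness of the $\Gamma$-action yields at once, by projecting a compact fundamental domain, a compact $V_0 \subset \dS^{1, n}$ with $H \cdot V_0 = \dS^{1, n}$. The key step is to show that $H$ also acts \emph{properly} on $\dS^{1, n}$. Realizing
\[
\tilde R \times \dS^{1, n} = (G_1 \times G_2)/(K_1 \times \SO_0(1, n))
\]
with $K_1 \subset G_1$ the compact isotropy, Kobayashi's Cartan-projection criterion translates properness of the $\Gamma$-action on the product into the bound: along any sequence $\gamma_i \in \Gamma$, a bounded Cartan projection $\mu_{G_1}(\gamma_i)$ forces a bounded $\mu_{G_2}(\gamma_i)$. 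I would then combine this with the description of $H$ as the closure of $\Gamma_2$ to deduce that, for any compact $V \subset \dS^{1,n}$, the set $\{h \in H : hV \cap V \neq \emptyset\}$ is relatively compact in $G_2$ (essentially because the Riemannian nature of $\tilde R$ makes the compact isotropy $K_1$ contribute trivially on the Cartan side, pinning the $G_2$-component to a bounded region whenever the $G_1$-component is bounded).

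With $H$ a closed subgroup of $\SO_0(1, n+1)$ acting properly and cocompactly on $\dS^{1, n}$, the Kobayashi strengthening of the Calabi--Markus phenomenon produces the contradiction: the isotropy $\SO_0(1, n)$ of $\dS^{1, n} = \SO_0(1, n+1)/\SO_0(1, n)$ is non-compact and of real rank equal to that of the ambient group, so no closed subgroup of $\SO_0(1, n+1)$ can act properly with compact fundamental region on $\dS^{1, n}$. The main obstacle in this plan is precisely the middle step, propagating properness to $H$ when $\Gamma_2$ is not a priori discrete in $G_2$; the argument hinges on the Riemannian character of $\tilde R$, via the compactness of $K_1$, to translate the proper $\Gamma$-action on the product into a proper $H$-action on the Lorentz factor. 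A back-up dynamical approach---extracting from any would-be failure of properness a sequence $\gamma_i = (k_i, g_i) \in \Gamma$ with $g_i V \cap V \neq \emptyset$, using Riemannian properness of $G_1 \curvearrowright \tilde R$ together with cocompactness of $\Gamma$ to upgrade this to a contradiction with the proper discontinuity on $\tilde R \times \dS^{1,n}$---should work as a parallel route.
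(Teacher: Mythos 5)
There is a genuine gap, and it sits exactly where you flagged it: the claim that properness of the $\Gamma$-action on $\tilde R\times \dS^{1,n}$ can be ``propagated'' to a proper action of $H=\overline{\Gamma_2}$ on $\dS^{1,n}$. The Cartan-projection mechanism you invoke in fact points the other way. Since the isotropy is $K_1\times \SO^o(1,n)$ with $K_1$ compact and $\SO^o(1,n)$ of \emph{full} real rank in $\SO^o(1,n+1)$, its Cartan projection is $\{0\}\times\mathfrak{a}^+(G_2)$, so Kobayashi's criterion says that $\Gamma$ acts properly on the product if and only if $\mu_{G_1}(\gamma_i)\to\infty$ along every injective sequence in $\Gamma$ --- it places \emph{no constraint whatsoever} on the $G_2$-components. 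Your parenthetical reduction (``bounded $\mu_{G_1}(\gamma_i)$ forces bounded $\mu_{G_2}(\gamma_i)$'') is vacuously true (such a set of $\gamma_i$ is finite) but useless: the failure of properness of $H$ on $\dS^{1,n}$ would come from sequences $\gamma_i=(a_i,b_i)$ with $b_iV\cap V\neq\varnothing$ and $a_i\to\infty$ in $G_1$, which is perfectly compatible with properness on the product. The back-up dynamical route has the same defect: from $g_iV\cap V\neq\varnothing$ on the de Sitter side you cannot manufacture a compact $W\subset\tilde R$ with $k_iW\cap W\neq\varnothing$, so no contradiction with proper discontinuity on $\tilde R\times\dS^{1,n}$ is obtained. (The final step --- Kobayashi's rank obstruction for a closed subgroup acting properly and cocompactly on $\dS^{1,n}$ --- is fine, but it is never reached.)

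The paper's proof (Proposition \ref{general foliated Calabi--Markus}) circumvents precisely this difficulty by running the dichotomy on the \emph{Riemannian}-side projection rather than the Lorentz one, and by exploiting the de Sitter geometry to make the Lorentz components irrelevant. If the projection $\widehat\Gamma$ to $G=\Isom(Y)$ is discrete, Lemma \ref{useful lemma} gives that $\Gamma\cap\Isom(\dS^{1,n})$ acts properly cocompactly on $\dS^{1,n}$, which the classical Calabi--Markus theorem forbids. If $\widehat\Gamma$ is non-discrete, one gets infinitely many distinct $\gamma_n$ whose $G$-components return a fixed compact $K$ to itself; since \emph{every} isometry of $\dS^{1,n}$ sends a spacelike totally geodesic hypersphere $S$ to another such hypersphere, which necessarily meets $S$, one concludes $\gamma_n(S\times K)\cap(S\times K)\neq\varnothing$ for all $n$, contradicting properness on the product. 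In other words, the burden that your argument tries to place on the Lorentz components is discharged for free by the sphere-intersection property, and the only genuine input needed is the (non-)discreteness of the Riemannian-side projection. To repair your proof you would need to replace the properness-propagation step by this (or an equivalent) argument.
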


\subsection{Beyond the Lorentz signature}
The completeness problem for compact locally symmetric manifolds remains largely open in the higher signature setting. Even the flat case of signature $(2,2)$ is completely unknown, i.e. it is still an open problem whether a compact flat semi-Riemannian $4$-manifold of signature $(2,2)$ is geodesically complete. This is a very particular case of Markus' conjecture, which asserts that a compact \textit{affine manifold} is complete if and only if it is \textit{unimodular}, i.e. it has a parallel volume form (see for example \cite[Chapter 11]{goldman2022geometric}).

\subsection*{Acknowledgment} The second author is grateful to his PhD adviser Ines Kath for her valuable feedback on an early draft of this paper. The first author would like to thank his PhD adviser Stefan Suhr for his support and helpful comments on this manuscript. 

The first author is supported by the SFB/TRR 191 `Symplectic Structures in Geometry, Algebra and Dynamics', funded by the DFG (Projektnummer 281071066-TRR191).

\section{Symmetric spaces}\label{Section: symmetric}

\begin{defi}\label{Defi: indecomposable}
    Let $X$ be a simply connected pseudo-Riemannian symmetric space and let $p\in X$. We say that $X$ is indecomposable if there is no proper non-degenerate subspace in $T_pX$ invariant by the holonomy group $\Hol_p$ at the point $p$.
\end{defi}
A simply connected Lorentz symmetric space $X$ can be decomposed isometrically (see \cite{deRham, Wu}) as a product $X=X^0\times X^1\times \cdots \times X^n$ where $X^0$ is a \textit{maximal flat factor} of $X$ (it is a maximal non-degenerate factor on which the holonomy acts trivially) and each $X^i$ is an indecomposable pseudo-Riemannian symmetric factor. Moreover, this decomposition is unique up to isometry and permutation, and it is called \textit{the de Rham--Wu decomposition}.  We say that $X$ has a \textit{flat Lorentz factor} if $X^0$ is Lorentz (this includes, by convention, the case $X^0\cong (\R,-dt^2)$).
\begin{notation}
    We denote by $X^L$ the Lorentz factor of $X$ whether it is a maximal flat factor or indecomposable. 
\end{notation}
    
\begin{fact}[\cite{cahenwallach}]\label{Fact: indecomposable}
Let $X$ be a simply connected indecomposable Lorentz symmetric space. Then either $X$ has non-zero constant sectional curvature, or isometric to a Cahen--Wallach space, or isometric to the line $(\R,-dt^2)$.
\end{fact}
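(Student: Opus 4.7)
The plan is to translate the problem into Lie-algebraic data and case-analyze by the action of the holonomy representation. Fix $p\in X$ and write $\g = \h \oplus \m$ for the associated symmetric decomposition, with $\m \cong T_pX$ carrying the invariant Lorentz form; then $[\h,\m]\subset\m$, $[\m,\m]\subset\h$, and the curvature is given by $R(X,Y)Z = -[[X,Y],Z]$ for $X,Y,Z \in \m$. Let $\h_0 := [\m,\m] \subset \h$ denote the holonomy algebra (identified with its image in $\End(\m)$ via the adjoint action); indecomposability of $X$ is equivalent to the absence of a proper non-degenerate $\h_0$-invariant subspace of $\m$.

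I would then split into three cases. If $\h_0 = 0$, then $R\equiv 0$ and $X$ is a simply connected flat Lorentz manifold, i.e.\ Minkowski $\R^{1,n}$, which is indecomposable only in dimension one, giving $(\R,-dt^2)$. If $\h_0$ acts irreducibly on $\m$: every proper subalgebra of $\so(1,n)$ stabilizes either a non-degenerate direction or a lightlike line, so irreducibility forces $\h_0 = \so(1,n)$; then the curvature tensor at $p$ must be proportional to the constant sectional curvature model, and parallel transport extends this to all of $X$, producing non-zero constant sectional curvature.

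The remaining, main case is when $\h_0$ acts reducibly but indecomposably on $\m$. Any invariant proper subspace is then degenerate, so its radical is a non-trivial $\h_0$-invariant totally isotropic subspace; in Lorentz signature this must be a lightlike line $\ell = \R e$. Complete $e$ to a Witt basis by choosing a lightlike $f$ with $\la e,f\ra = 1$ and setting $E := e^\perp \cap f^\perp$ (Euclidean), so that $\m = \R e \oplus E \oplus \R f$ and $\h_0$ lies inside the parabolic $(\R \oplus \so(E)) \ltimes E$ stabilizing $\ell$.

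The hardest step is to extract the Cahen--Wallach normal form from the remaining bracket relations. Using that $\ad(X)$ preserves $\ell$ for every $X \in \m$ (since $[\m,\m] \subset \h$ stabilizes $\ell$), I would establish in turn: (a) $[e,\m] = 0$, so that $e$ extends by parallel transport to a globally defined parallel lightlike vector field on $X$; (b) $\h_0$ is contained in the unipotent radical $E$ of the parabolic, i.e.\ the holonomy is unipotent and abelian; (c) the symmetric endomorphism $S\colon E \to E$ implicit in the iterated bracket $[f,[f,\cdot]]$ is non-degenerate, the non-degeneracy being exactly the indecomposability condition. Once these structural facts are in place, the full symmetric triple $(\g,\h,\sigma)$ is determined by the pair $(E,S)$ and matches that of a Cahen--Wallach space. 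The delicate algebraic point is the vanishings in (a) and (b), which one obtains by repeatedly applying the Jacobi identity to triples drawn from $\{e,f,E\}$ and exploiting the invariance of $\ell$ under $[\m,\m]$.
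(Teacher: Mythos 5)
The paper does not prove this Fact at all --- it is quoted from Cahen--Wallach --- so the only meaningful comparison is with the classical argument, and your sketch is essentially a reconstruction of it: pass to the symmetric triple $\g=\h\oplus\m$ with holonomy algebra $\h_0=[\m,\m]$, split according to whether $\h_0$ is trivial, irreducible, or preserves only degenerate subspaces, and in the last case produce the null line $\ell$, the parallel null vector, and the symmetric operator $S$ on $E$ whose non-degeneracy is equivalent to indecomposability. The architecture is correct and matches the source.

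Two points keep this from being a proof rather than an outline. First, in the irreducible case your stated justification is false: it is not true that every proper subalgebra of $\so(1,n)$ stabilizes a non-degenerate direction or a lightlike line (for $n\geq 4$ the subalgebra $\so(1,2)\oplus\so(n-2)$ stabilizes no line whatsoever, degenerate or not). What you actually need is the non-trivial theorem that the only connected subgroup of $\O(1,n)$ acting irreducibly on $\R^{1,n}$ is $\SO^o(1,n)$ (di Scala--Olmos); alternatively, in the symmetric setting one can invoke Berger's classification of irreducible pseudo-Riemannian symmetric spaces. Either way this step needs a citation or an argument, not the one-line claim you give. Second, the entire content of the Cahen--Wallach case sits in your items (a)--(c), which you assert can be obtained ``by repeatedly applying the Jacobi identity'' without carrying any of it out; in particular (b), that $\h_0$ lies in the abelian unipotent radical $E$ rather than merely in the parabolic $(\R\oplus\so(E))\ltimes E$, is exactly where the curvature symmetries $\la R(X,Y)Z,W\ra=\la R(Z,W)X,Y\ra$ and the first Bianchi identity must be played off against the invariance of $\ell$, and until that computation is done the reduction of the symmetric triple to the pair $(E,S)$ --- hence the identification with a Cahen--Wallach space --- is not established. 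As a roadmap the proposal is sound; as a proof it defers precisely the hard part.
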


\begin{remark}[Cahen--Wallach spaces]\label{Remark: CW}
    The geometry of Cahen--Wallach spaces is quite different compared to the constant curvature case. These spaces have a (unique) parallel lightlike vector field $V$ which is invariant by a subgroup of index two in the isometry group. Moreover, their full isometry groups are amenable, i.e. $K\ltimes R$ where $K$ is compact and $R$ is solvable. For more details, see \cite[Section 2]{KO}. 
\end{remark}

\subsection{Splitting of the isometry group} In this section we show that the isometry group of a decomposable Lorentz symmetric space also decomposes in a natural way if the Lorentz factor is not a Cahen--Wallach space.
\begin{pr}\label{splitting isometry}
     Let $X=X^L\times Y$ be a simply connected Lorentz symmetric space. Assume that $X^L$ has constant sectional curvature. Then $\Isom(X)=\Isom(X^L)\times \Isom(Y)$. 
\end{pr}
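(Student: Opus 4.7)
The plan is to reduce the proposition to the statement that every isometry $\phi$ of $X = X^L \times Y$ preserves the two product distributions $D^L := TX^L$ and $D^Y := TY$. Both are parallel and integrable, with leaves the slices $X^L \times \{y\}$ and $\{x\} \times Y$. Hence $\phi$-invariance of the two foliations $\mathcal{F}^L$ and $\mathcal{F}^Y$ yields a decomposition $\phi(x,y) = (\phi_L(x), \phi_Y(y))$: preservation of $\mathcal{F}^L$ forces the $Y$-component of $\phi$ to depend only on $y$, and preservation of $\mathcal{F}^Y$ forces the $X^L$-component to depend only on $x$. Restricting to a single leaf identifies $\phi_L \in \Isom(X^L)$ and $\phi_Y \in \Isom(Y)$; the reverse inclusion is obvious.

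The heart of the argument is therefore to characterise $D^L$ by an isometry-invariant condition. I would split along the two possibilities allowed by the hypothesis on $X^L$, using the holonomy splitting $\Hol_p(X) = \Hol_p(X^L) \times \Hol_p(Y)$ acting on $T_pX = T_pX^L \oplus T_pY$. If $X^L$ has non-zero constant curvature (de Sitter or anti-de Sitter), then $\Hol_p(X^L)$ acts irreducibly on $T_pX^L$, so every holonomy-invariant subspace of $T_pX$ has the form $V_1 \oplus V_2$ with $V_1 \in \{0, T_pX^L\}$. The unique such subspace of Lorentz signature and dimension $\dim X^L$ is $T_pX^L$ itself, making $D^L$ canonical. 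If instead $X^L$ is flat Lorentzian (so $X^L = \R^{1,k}$, including the line $(\R, -dt^2)$), then by the maximality of the flat factor the complement $Y$ has no Riemannian flat factor, and hence $\Hol_p(Y)$ admits no non-zero fixed vector. Consequently the fixed set of $\Hol_p(X)$ equals $T_pX^L$; equivalently, the parallel vector fields on $X$ span $D^L$ pointwise. Since any isometry preserves $\nabla$ and permutes parallel vector fields, $D^L$ is preserved.

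In both cases $\phi_\ast D^L = D^L$ and hence $\phi_\ast D^Y = D^Y$, so the splitting argument of the first paragraph completes the proof. The main technical obstacle is producing the canonical description of $D^L$; this is precisely where the Cahen--Wallach case would fail, because the parallel lightlike vector field $V$ on a Cahen--Wallach factor can combine with a parallel vector in an auxiliary flat Riemannian factor to produce additional holonomy-invariant subbundles, yielding \emph{tilted} decompositions and genuinely non-product isometries. The constant sectional curvature hypothesis on $X^L$ is exactly what rules this pathology out.
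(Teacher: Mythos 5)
Your proof is correct and takes essentially the same route as the paper: both arguments use the holonomy splitting $\Hol_p=\Hol_{p_1}\times\Hol_{p_2}$ to show the tangent splitting $T_pX=T_pX^L\oplus T_pY$ is canonical, treating separately the non-zero constant curvature case (where the paper has the stabilizer normalize $\SO^o(1,n)$ and take its fixed space, while you classify the holonomy-invariant subspaces by signature and dimension --- note you implicitly also use that $\Hol_{p_1}$ acts trivially on $T_pY$, so that invariant subspaces actually split along the isotypic decomposition) and the flat case (where both identify $T_pX^L$ with the fixed set of the holonomy, using that $Y$ has no flat factor). The difference is only the representation-theoretic versus group-theoretic phrasing of the first case.
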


\begin{proof} 
Let us first treat the case where $X^L$ is the de Sitter or anti de Sitter space. For $p=(p_1,p_2)\in X$ we have $\Hol_p=\Hol_{p_1}\times \Hol_{p_2}$. The stabilizer $\Stab(p)\subset \Isom(X)$ normalizes $\Hol_{p_1}\times \Hol_{p_2}$. But $\Hol_{p_1}=\SO^o(1,n)$ and $\Hol_{p_2}$ is compact. This implies that $\Stab(p)$ normalizes $\SO^o(1,n)$. So, $\Stab(p)$ leaves invariant the fixed space of $\SO^o(1,n)$ which is nothing but the subspace tangent to the $Y$-direction at $p$. Hence, it also leaves invariant its orthogonal i.e. the subspace tangent to the $X^L$-direction at $p$. This means that $\Isom(X)$ preserves the splitting $X=X^L\times Y$.

Assume now that $X^L$ is the maximal flat Lorentz factor, so it is isometric to a Minkowski space. Then $Y=G/K$ is a symmetric Riemannian space with a trivial maximal flat factor, in particular with semisimple isometry group $G$. The holonomy group at a point $p$ coincides with the isotropy $K$ of the factor $Y$ (see \cite[Proposition 10.79]{besse2007einstein}). On the other hand, the fixed space of the $K$-action at $p$ is exactly $X^L$. Therefore, the stabilizer of $p$ preserves the factor $X^L$. The rest follows similarly as above.
\end{proof}

\section{Injectivity of the developing map}
In this section we assume that $X=X^L\times Y$ is a simply connected Lorentz symmetric space whose Lorentz factor $X^L=X_{\kappa}$ is the universal model of constant curvature $\kappa$. Let $M$ be a compact $(\Isom(X),X)$-manifold and we fix a developing pair $(D,\rho)$ of $M$.
\subsection{Natural foliations} Proposition \ref{splitting isometry} implies that a compact manifold $M$ locally isometric to $X=X^L\times Y$ inherits two transverse foliations $\F_1$ and $\F_2$ where the leaves of $\F_1$ are Lorentz of constant curvature locally modeled on $X_{\kappa}$ and the leaves of $\F_2$ are Riemannian modeled on $Y$. The same applies to $\widetilde{M}$, we have two foliations $\widetilde{\F}_1$ and $\widetilde{\F}_2$.

\begin{lemma}\label{compleness riemannian lemma}
    Let $M$ be a compact manifold locally isometric to $X$. Each leaf of $\widetilde{\F}_2$ is mapped, under the developing map, bijectively and isometrically onto a vertical Riemannian fiber $\left\{x_0\right\}\times Y$.
\end{lemma}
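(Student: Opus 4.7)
The plan is to combine the splitting from Proposition \ref{splitting isometry}, completeness of the Riemannian symmetric space $Y$, and compactness of $M$. First I would show that each leaf is sent into a single vertical fiber. Because $\Isom(X)=\Isom(X^L)\times\Isom(Y)$, the distribution tangent to the $Y$-factor of $X$ is globally parallel; pulling it back by the local isometry $D$ gives the parallel distribution $T\widetilde{\F}_2$ on $\widetilde M$. In particular each leaf $\widetilde L$ of $\widetilde{\F}_2$ is a totally geodesic submanifold of $\widetilde M$. In any product chart of $D$, plaques of $\widetilde{\F}_2$ are identified with slices $\{x\}\times V_2\subset X^L\times Y$, so by connectedness the $X^L$-component of $D$ is constant along $\widetilde L$. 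We obtain $D(\widetilde L)\subset\{x_0\}\times Y$ and $D|_{\widetilde L}\colon\widetilde L\to\{x_0\}\times Y$ is a local isometry of Riemannian manifolds.

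The key step is to prove that $\widetilde L$ is geodesically complete in the induced Riemannian metric. Let $\gamma\colon[0,T)\to\widetilde L$ be a unit-speed geodesic defined on a maximal interval, and suppose for contradiction that $T<\infty$. By total geodesy of the leaf, $\gamma$ is also a geodesic of $\widetilde M$, and $\bar\gamma:=\pi\circ\gamma\colon[0,T)\to M$ (with $\pi\colon\widetilde M\to M$ the covering projection) is a unit-speed geodesic of $M$ contained in a leaf of $\F_2$. Compactness of $M$ yields a subsequence $\bar\gamma(t_n)\to q\in M$. Pick a foliation chart around $q$ modeled on $V_1\times V_2$ with $V_2$ a metric ball of radius $r>0$ in $Y$, and choose $n$ so that $T-t_n<r/2$ and the $Y$-coordinate of $\bar\gamma(t_n)$ lies within $r/2$ of the center of $V_2$. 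Since $\gamma$ is tangent to $\F_2$, the $V_1$-coordinate of $\bar\gamma$ is locally constant, so $\bar\gamma$ stays in the single plaque through $\bar\gamma(t_n)$ for as long as it remains in $V_2$; this plaque is isometric to $V_2$, a piece of the complete manifold $Y$, so the geodesic extends there for time at least $r/2>T-t_n$. Path-lifting along $\pi$ transfers this extension back to $\widetilde L$, contradicting maximality of $T$.

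Once $\widetilde L$ is complete and connected, the local isometry $D|_{\widetilde L}\colon\widetilde L\to\{x_0\}\times Y$ is a Riemannian covering map by the classical Ambrose theorem (a local isometry from a complete, connected Riemannian manifold is a covering onto its image). Since $Y$ is simply connected and $\widetilde L$ is connected, this covering must be a global isometric diffeomorphism, which is precisely the conclusion.

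The main obstacle is the middle step: leaves of $\widetilde{\F}_2$ need not be embedded, and their intrinsic completeness is not automatic from the local product structure of $X$. The argument crucially uses compactness of $M$ to trap a would-be inextensible geodesic inside a single plaque, combined with completeness of $Y$ to continue it; everything else is a formal consequence of Proposition \ref{splitting isometry} and the simple connectivity of $Y$.
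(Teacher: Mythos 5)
Your proposal is correct and follows essentially the same route as the paper: compactness of $M$ gives intrinsic completeness of the $\widetilde{\F}_2$-leaves, the developing map restricts to a local isometry onto a fiber $\{x_0\}\times Y$, completeness upgrades this to a Riemannian covering, and simple connectivity of $Y$ forces it to be an isometric diffeomorphism. The only difference is that you spell out the standard plaque-trapping argument for leafwise completeness and the constancy of the $X^L$-coordinate along a leaf, both of which the paper asserts without detail.
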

\begin{proof}
    The compactness of the manifold $M$ implies that each leaf of the foliation $\F_2$ is complete (with respect to its Riemannian structure locally modeled on $Y$). Hence each leaf $\widetilde{\F}_2(p)$ of the foliation $\widetilde{\F}_2$ is also complete. The developing map $D$, restricted to $\widetilde{\F}_2(p)$, is a local isometry between $\widetilde{\F}_2(p)$ and a corresponding fiber $\left\{x_0\right\}\times Y$. Since $\widetilde{\F}_2(p)$ is complete, then $D:\widetilde{\F}_2(p)\to \left\{x_0\right\}\times Y$ is a covering map. But $Y$ being simply connected implies that $D:\widetilde{\F}_2(p)\to \left\{x_0\right\}\times Y$ is in fact an isometric diffeomorphism.
\end{proof}

\begin{cor}\label{image universal cover}
    The universal cover $\widetilde{M}$ is mapped, under the the developing map, onto $\Omega\times Y$ where $\Omega\subset X_{\kappa}$ is an open subset.
    
\end{cor}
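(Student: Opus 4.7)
The plan is to combine openness of the image of the developing map with the saturation property in the $Y$-direction provided by Lemma \ref{compleness riemannian lemma}. Let $\pi: X_\kappa \times Y \to X_\kappa$ denote the projection onto the Lorentz factor, and set $\Omega := \pi(D(\widetilde{M}))$.

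First, since $D$ is a local diffeomorphism, $D(\widetilde{M})$ is an open subset of $X_\kappa \times Y$. The projection $\pi$ is an open map (it is a submersion), so $\Omega$ is open in $X_\kappa$. This already gives the inclusion $D(\widetilde{M}) \subseteq \Omega \times Y$.

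For the reverse inclusion, I would pick any $(x_0, y_0) \in \Omega \times Y$ and find a preimage in $\widetilde{M}$. By definition of $\Omega$, there exists $p \in \widetilde{M}$ with $D(p) = (x_0, y_1)$ for some $y_1 \in Y$. The leaf $\widetilde{\F}_2(p)$ through $p$ is then mapped by $D$ bijectively and isometrically onto the fiber $\{x_0\} \times Y$, according to Lemma \ref{compleness riemannian lemma}. In particular, $(x_0, y_0)$ lies in $D(\widetilde{\F}_2(p)) \subset D(\widetilde{M})$. Hence $\Omega \times Y \subseteq D(\widetilde{M})$, and combining the two inclusions yields $D(\widetilde{M}) = \Omega \times Y$.

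No step here appears to present real difficulty: the work was done in the preceding lemma, which upgraded the local isometry along leaves of $\widetilde{\F}_2$ to a global fiberwise identification using the compactness of $M$ (hence completeness of the Riemannian leaves of $\F_2$) and the simple connectedness of $Y$. The corollary is essentially the observation that this fiberwise surjectivity saturates the image of $D$ in the $Y$-direction, leaving only a Lorentz open set $\Omega \subset X_\kappa$ to be determined.
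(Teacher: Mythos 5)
Your argument is correct and is precisely the intended deduction: the paper states this as an immediate corollary of Lemma \ref{compleness riemannian lemma} without writing out a proof, and your saturation argument (openness of $D(\widetilde{M})$ plus fiberwise surjectivity along the $\widetilde{\F}_2$-leaves) is exactly what is implicit there.
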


\subsection{A natural action on $\widetilde{M}$} Since each leaf of $\widetilde{\F}_2$ is identified canonically under the developing map to $Y=G/K$, we obtain a well-defined $G$-action on $\widetilde{M}$ for which the developing map is $G$-equivariant. Since the developing map is isometric, we have
\begin{cor}
    The $G$-action on $\widetilde{M}$ is isometric, proper, and with orbits the $\widetilde{\F}_2$-leaves.
\end{cor}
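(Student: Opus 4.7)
The plan is to verify the three claimed properties by transferring them from the standard $G$-action on the Riemannian symmetric factor $Y=G/K$ through the $G$-equivariant local isometry $D$ constructed in the preceding paragraph. The key observation is that in any chart provided by $D$, the $G$-action on $\widetilde{M}$ pulls back the product action on $X=X^L\times Y$ that is trivial on the first factor and standard on $Y$. Smoothness of the action on $\widetilde{M}$ then follows from smoothness of this product action together with the fact that $D$ is a local diffeomorphism, and $G$-equivariance of $D$ ensures that the local pieces glue coherently into a global action.

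For the isometric property, I would argue that since $D$ is a local isometry and $G$ acts on $X$ by isometries (trivially on the Lorentz factor and by the standard isometric action on $Y$), the equivariance $D\circ g=g\circ D$ forces each $g\in G$ to act on $\widetilde{M}$ as a local, hence global, isometry. For properness, the plan is to reduce to properness of the $G$-action on $Y$, which holds since $K$ is compact. Given sequences $p_n\to p$ in $\widetilde{M}$ and $g_n\in G$ with $g_n\cdot p_n\to q$, writing $D(p_n)=(x_n,y_n)$ and using equivariance together with continuity of $D$ yields $y_n\to y_p$ and $g_n\cdot y_n\to y_q$ in $Y$, so properness of the $G$-action on $Y$ supplies a convergent subsequence of $(g_n)$.

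For the orbit description, by construction the $G$-action fixes the first coordinate of $D$ and operates through the standard transitive $G$-action on the second coordinate. Lemma \ref{compleness riemannian lemma} identifies each leaf $\widetilde{\F}_2(p)$ bijectively with a fiber $\{x_0\}\times Y$, and since $G$ acts transitively on $Y$, the $G$-action is transitive on each leaf. Hence the orbits coincide exactly with the $\widetilde{\F}_2$-leaves. No serious obstacle is expected; the only step requiring some care is verifying that the leaf-by-leaf action glues smoothly into a single $G$-action on $\widetilde{M}$, which is precisely what the local chart argument at the start provides.
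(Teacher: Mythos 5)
Your proposal is correct and follows exactly the route the paper intends: the paper states this corollary with no separate proof, justifying it only by the canonical $G$-equivariant identification of each $\widetilde{\F}_2$-leaf with $Y=G/K$ under the isometric developing map, which is precisely the transfer argument you carry out (including deducing properness from properness of the $G$-action on $G/K$ via the equivariant projection $\sigma=\pi\circ D$). No discrepancy to report.
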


\subsection{Product structure of the universal cover} We have seen in Corollary \ref{image universal cover} that $D(\widetilde{M})=\Omega\times Y$ where $\Omega\subset X_{\kappa}$ is an open subset. The subset $\widehat{\Omega}=D^{-1}\left(\Omega\times\left\{y\right\}\right)$ is a global cross section of the $\widetilde{\F}_2$-foliation. Indeed, consider the map $\sigma:\widetilde{M}\to Y$ where $\sigma=\pi\circ D$ and $\pi:X_{\kappa}\times Y\to Y$ is the natural projection. We have $\sigma^{-1}(y)=\widehat{\Omega}$ and, by construction, $\sigma$ is $G$-equivariant. Since each leaf $\widetilde{\F}_2(p)$ is identified under the map $\sigma$ with $Y$, then $\widetilde{\F}_2(p)$ intersects $\sigma^{-1}(y)=\widehat{\Omega}$ exactly in one point. In other words, $\widehat{\Omega}$ is a leaf of $\widetilde{\F}_1$ which is a global cross section of the foliation $\widetilde{\F}_2$. 

\begin{cor}\label{direct product}
    The universal cover $\widetilde{M}$ is globally isometric to $\widehat{\Omega}\times Y$.
\end{cor}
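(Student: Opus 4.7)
The plan is to construct an explicit isometry $\Psi\colon \widetilde{M}\to \widehat{\Omega}\times Y$ by setting $\Psi(p):=(q(p),\sigma(p))$, where $q(p)$ is the unique intersection of the leaf $\widetilde{\F}_2(p)$ with the global cross-section $\widehat{\Omega}$. Existence, uniqueness and smoothness of $q(p)$ are immediate from the cross-section property of $\widehat{\Omega}$ together with the transversality of $\widehat{\Omega}$ to the smooth foliation $\widetilde{\F}_2$. To invert $\Psi$, I would define $\Phi(q,y'):=g\cdot q$ for any $g\in G$ with $g\cdot y=y'$; this is well-defined because the isotropy $K$ of $y$ fixes $\widehat{\Omega}$ pointwise, which follows directly from the definition of the $G$-action — every $k\in K$ acts on the leaf $\widetilde{\F}_2(q)$ as it does on $Y=G/K$, hence fixes the point corresponding to $y$, namely $q$ itself. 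The identities $\Psi\circ\Phi=\mathrm{id}$ and $\Phi\circ\Psi=\mathrm{id}$ then hold by construction, giving a diffeomorphism.

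To upgrade this to an isometry, I would equip $\widehat{\Omega}$ with the Lorentz metric it inherits as a $\widetilde{\F}_1$-leaf of $\widetilde{M}$ (locally isometric to $X_\kappa$), and $\widehat{\Omega}\times Y$ with the product metric. A crucial intermediate step is that $G$ preserves the foliation $\widetilde{\F}_1$ as a whole: the $G$-equivariance of $D$, combined with the fact that the product $G$-action on $X_\kappa\times Y$ sends each slice $X_\kappa\times\{y'\}$ to the slice $X_\kappa\times\{g\cdot y'\}$, transports to $\widetilde{M}$ and shows that $g\cdot\widehat{\Omega}$ is again a $\widetilde{\F}_1$-leaf. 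Consequently, for each $p\in\widetilde{M}$ and any $g\in G$ with $g\cdot q(p)=p$, the leaf $\widetilde{\F}_1(p)$ coincides with $g\cdot\widehat{\Omega}$, and the restriction $q|_{\widetilde{\F}_1(p)}$ is precisely $g^{-1}$, which is an isometry.

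With these ingredients, the isometry check at a point $p$ splits along the orthogonal decomposition $T_p\widetilde{M}=T_p\widetilde{\F}_1(p)\oplus T_p\widetilde{\F}_2(p)$, where orthogonality is transported from the product metric on $X_\kappa\times Y$ through the local isometry $D$. The differential $d\Psi_p$ sends the second summand isometrically onto $\{0\}\oplus T_{\sigma(p)}Y$ by Lemma \ref{compleness riemannian lemma}, and the first isometrically onto $T_{q(p)}\widehat{\Omega}\oplus\{0\}$ via the $g^{-1}$ identification just noted. The main obstacle I anticipate lies in rigorously transferring the local product structure (orthogonality of the foliations, $G$-invariance of $\widetilde{\F}_1$) from the model $X_\kappa\times Y$ to $\widetilde{M}$ using only the local-isometry and $G$-equivariance properties of $D$; once these are nailed down, the global isometry statement follows cleanly from the orthogonal direct sum decomposition.
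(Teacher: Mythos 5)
Your construction is correct and follows the same route the paper takes implicitly: the corollary is deduced from the facts, established just before it, that $\widehat{\Omega}=\sigma^{-1}(y)$ is a global cross-section of $\widetilde{\F}_2$, that each $\widetilde{\F}_2$-leaf is canonically identified with $Y$ (Lemma \ref{compleness riemannian lemma}), and that the resulting $G$-action is isometric with orbits the $\widetilde{\F}_2$-leaves. Your explicit map $p\mapsto(q(p),\sigma(p))$, its inversion via the $G$-action, and the orthogonal splitting of $d\Psi_p$ transported through the local isometry $D$ simply spell out the details the paper leaves to the reader.
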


In particular, $\widehat{\Omega}$ is a connected and simply connected Lorentz manifold of constant curvature $\kappa$. The injectivity of the developing map $D:\widetilde{M}\to X_{\kappa}\times Y$ is then equivalent to the fact that $\widehat{\Omega}$ isometrically embeds in $X_{\kappa}$.

\subsection{The injectivity} We have seen in Corollary \ref{direct product} that $\widetilde{M}=\widehat{\Omega}\times Y$ and $\pi_1(M)=\Gamma\subset \Isom(\widehat{\Omega})\times G$ acts freely, properly, and cocompactly on $\M$ where $G$ is the isometry group of $Y$. Frances in \cite[Sect. 6]{francescoarse} considers, among many other things, a similar situation (in fact a more general setting of warped products). He observed \cite[Proposition 6.9]{francescoarse} that the injectivity result in the works of Carri\`ere and Klingler \cite{carriere1989autour, klingler1996completude} can be adapted to the product setting. That is,
\begin{pr}[\cite{francescoarse}, Sect. 6.3]\label{injectivity frances} $\widehat{\Omega}$ is isometric to $X_{\kappa}$ or to a convex open subset $\Omega$ of $X_{\kappa}$ whose boundary is either a lightlike hyperplane or the disjoint union of two lightlike hyperplanes.
    
\end{pr}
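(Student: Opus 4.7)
The plan is to adapt the Carri\`ere--Klingler discompactness technique (\cite{carriere1989autour, klingler1996completude}) to the present product setting, as carried out by Frances \cite{francescoarse}.

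First, I would translate the cocompactness of $\Gamma\curvearrowright\widetilde{M}=\widehat{\Omega}\times Y$ into a cocompactness statement on $\widehat{\Omega}$ alone. Taking a compact fundamental set $D\subset\widetilde{M}$ and projecting to the first factor yields a compact $\widehat{D}\subset\widehat{\Omega}$ satisfying $\pi_L(\Gamma)\cdot\widehat{D}=\widehat{\Omega}$, where $\pi_L:\Isom(\widehat{\Omega})\times G\to\Isom(X_\kappa)$ denotes the projection. The $\pi_L(\Gamma)$-action on $\widehat{\Omega}$ need not be free or properly discontinuous (the kernel of $\pi_L$ may act nontrivially on $Y$), but this weaker cocompactness will be sufficient input for the discompactness estimates.

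Next, I would invoke the core Carri\`ere--Klingler estimate: for any sequence $(g_n)\subset\Isom(X_\kappa)$ escaping to infinity, the distortion of $g_n$ (measured via its Cartan projection) is uniformly bounded by a constant depending only on the dimension. Combined with the cocompact-like setup on $\widehat{\Omega}$, this bound forces the ``ideal boundary'' of $\widehat{\Omega}$ inside $X_\kappa$ to consist of totally geodesic lightlike hypersurfaces. A convexity argument---Carri\`ere's in the flat case, Klingler's for non-zero constant curvature---then shows that $\widehat{\Omega}$ is isometric either to all of $X_\kappa$, to an open half-space bounded by a single lightlike hyperplane, or to the open slab between two parallel lightlike hyperplanes.

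The main obstacle is handling the interaction between the Lorentz and Riemannian factors: a sequence $\gamma_n\in\Gamma$ going to infinity may escape entirely into the $G$-factor (with $\pi_L(\gamma_n)$ bounded), or partly so. The crucial input that makes the discompactness argument survive is that $G$ acts properly with compact stabilizers on the Riemannian factor $Y$, so any divergence ``in the $Y$-direction'' is controlled and cannot conspire with the Lorentz dynamics to produce new geometric phenomena. This is exactly the content of Frances' coarse-dynamical framework in \cite[Sect.~6]{francescoarse}, whose Proposition 6.9 delivers the desired conclusion once one verifies the (direct) product hypothesis, which is a special case of the warped-product setting he considers.
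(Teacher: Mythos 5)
Your overall strategy coincides with the paper's: both adapt the Carri\`ere--Klingler discompacity argument to the product setting, following Frances. But there is a concrete gap in your first step. Projecting a compact fundamental set to the first factor gives only that $\pi_L(\Gamma)$ acts cocompactly on $\widehat{\Omega}$, and you assert that this ``weaker cocompactness will be sufficient input for the discompactness estimates.'' It is not, at least not directly: what the Carri\`ere--Klingler argument actually consumes is a sequence of pairwise \emph{disjoint} images $\rho(\alpha_n^{-1})B_1$ of a fixed small ellipsoid $B_1\subset X_{\kappa}$ accumulating along the incomplete geodesic; the disjointness is what forces the volumes to collapse, and discompacity $1$ of the stabilizer then forces the limit ellipsoid to be degenerate of codimension exactly $1$, whence convexity of the visibility set. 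Since $\pi_L(\Gamma)$ need not act properly discontinuously on $\widehat{\Omega}$ (as you yourself note), cocompactness of the projected action yields no disjointness whatsoever, and the volume argument breaks down at its first step.

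The paper recovers the disjointness precisely through the point you relegate to a vague closing remark. One works in $\widetilde{M}=\widehat{\Omega}\times Y$ with a product ball $\widetilde{B}=\widetilde{B}_1\times \widetilde{B}_2$ around the limit $\tilde{w}$ of the returned sequence $\alpha_n\tilde{v}_n$, chosen so small that $\alpha\widetilde{B}\cap\widetilde{B}=\varnothing$ for all $\alpha\neq e$ in $\pi_1(M)$. Because the points $\tilde{v}_n$ all lie on a single Lorentzian leaf (fixed $Y$-coordinate) and $G$ acts properly on $Y$, the $G$-components of the $\alpha_n^{-1}$ stay in a compact subset of $G$; hence, after extraction, the $Y$-factors of the balls $\alpha_n^{-1}\widetilde{B}$ pairwise intersect, so the disjointness of the product balls must be carried entirely by their $X_{\kappa}$-factors. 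This is what produces the disjoint ellipsoids $C_n=D(\alpha_n^{-1}\widetilde{B}_1)=\rho(\alpha_n^{-1})B_1$ and lets the classical argument run; your proposal names the properness of the $G$-action as ``the crucial input'' but never connects it to the disjointness it is actually needed for. A further small point: your conclusion describes the two-boundary case as a slab between two \emph{parallel} lightlike hyperplanes, but that qualifier is only correct for $\kappa=0$; for $\kappa=-1$ the two lightlike hyperplanes bounding $\Omega$ need not be parallel (the paper treats the non-parallel case separately later), which is why the proposition deliberately omits the word.
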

As mentioned above, the ideas in \cite{carriere1989autour, klingler1996completude} can be carried out in our product situation. Indeed, let $\tilde{p}\in \M$ and let $\gamma$ be an incomplete geodesic starting at $\tilde{p}$ and contained in $\widetilde{\F}_1(\tilde{p})$ the Lorentz leaf through $\tilde{p}$. Consider the visibility set $E_{\tilde{p}} \subset  \widetilde{\F}_1(\tilde{p})$ (see \cite[Section 2]{klingler1996completude} for the definition). Let $\{\tilde{v}_n\}_{n\in \N}\subset \gamma$ be a sequence that goes outside all compact subsets. Since $M$ is compact then, up to multiplying $\{\tilde{v}_n\}_{n\in \N}$ by a well-chosen sequence $\alpha_n\in \pi_1(M)$, we can assume that $\tilde{w}_n\mathrel{\mathop:}=\alpha_n\tilde{v}_n$ is convergent.  The limit point $\tilde{w}$, of the sequence $\{\tilde{w}_n\}_{n\in \N}$, belongs a priori to a different $\widetilde{\F}_1$-leaf. Choose a small enough product ball $\widetilde{B}=\widetilde{B}_1\times \widetilde{B}_2$ around $\tilde{w}$, so that $\alpha \widetilde{B}\cap \widetilde{B}=\varnothing$ for all $\alpha \in \pi_1(M)$ and $\widetilde{B}$ develops injectively. We obtain a sequence $\{C_n\}_{n\in \N}$ of disjoint ellipsoids around $D(\tilde{v}_n)$ where  $C_n\mathrel{\mathop:}= D(\alpha_n^{-1}\widetilde{B}_1)=\rho(\alpha_n^{-1})B_1$. Moreover, up to multiplication on the left and the right by two sequences living in a compact neighborhood of the identity, the sequence $g_n\mathrel{\mathop:}=\rho(\alpha_n^{-1})$ can be assumed to be in the stabilizer of the point $D(\tilde{w})$. Since $\Stab(D(\tilde{w}))$, restricted to the Lorentz leaf passing through $D(\tilde{w})$, has discompacity $1$, we conclude that the sequence of ellipsoids  $\{C_n\}_{n\in \N}$ converges to a degenerate ellipsoid of codimension $1$  (see \cite{carriere1989autour, klingler1996completude}). Hence the Carri\`ere--Klingler proof of the convexity (of the visibility set). Having the convexity at hand, the description of the boundary follows similarly to \cite[Proposition 3]{klingler1996completude}.

\begin{cor}\label{boundaries}
    We have that $M=\Gamma\backslash\left(\Omega\times Y\right)$ where $\Gamma\subset \Isom(X_{\kappa})\times G$ is a discrete subgroup acting freely, properly, and cocompactly on $\Omega\times Y$ and $\Omega\subset X_{\kappa}$ is a convex open subset as concluded in Proposition \ref{injectivity frances}.
\end{cor}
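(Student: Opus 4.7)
The plan is to bolt together the three structural results already established in this section. First, Corollary \ref{direct product} supplies a global isometric identification $\widetilde{M} = \widehat{\Omega}\times Y$, where $\widehat{\Omega}$ is a connected, simply connected Lorentz manifold of constant curvature $\kappa$. Proposition \ref{injectivity frances} then identifies $\widehat{\Omega}$ with an open convex subset $\Omega \subset X_\kappa$ (possibly all of $X_\kappa$) whose boundary, if nonempty, consists of one or two lightlike hyperplanes. Combining these two facts, I would conclude that the developing map restricts to an isometric diffeomorphism $D \colon \widetilde{M} \xrightarrow{\sim} \Omega\times Y \subset X_\kappa \times Y$; in particular $D$ is injective, and $\widetilde M$ is realized concretely as a product inside the model.

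Second, I would use Proposition \ref{splitting isometry}, applied to the constant-curvature Lorentz factor $X^L = X_\kappa$, to split the ambient isometry group as $\Isom(X_\kappa\times Y) = \Isom(X_\kappa)\times \Isom(Y) = \Isom(X_\kappa)\times G$. The equivariance relation $D\gamma = \rho(\gamma)D$ then forces the holonomy representation to land in this product: $\rho(\pi_1(M)) \subset \Isom(X_\kappa)\times G$. Setting $\Gamma \mathrel{\mathop:}= \rho(\pi_1(M))$, transport of the deck action through $D$ converts the $\pi_1(M)$-action on $\widetilde M$ into a $\Gamma$-action on $\Omega\times Y$ by isometries, and the presentation $M = \Gamma\backslash(\Omega\times Y)$ drops out.

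The remaining properties of the $\Gamma$-action are inherited from the deck action: freeness and proper discontinuity transfer directly through the equivariant diffeomorphism $D$, and cocompactness is equivalent to compactness of $M$, which is the standing hypothesis. Injectivity of $\rho$ (so that $\Gamma \simeq \pi_1(M)$) is automatic, since any kernel element would act trivially on $\widetilde M$ and hence be trivial.

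Because the heavy lifting was already absorbed into the injectivity statement (Proposition \ref{injectivity frances}) and the isometry-group splitting (Proposition \ref{splitting isometry}), this corollary is essentially bookkeeping, and I do not expect a serious obstacle. The one point worth flagging is the need to ensure that $\rho$ actually factors through the \emph{product} subgroup $\Isom(X_\kappa)\times G$, rather than preserving $X_\kappa\times Y$ only up to some permutation or twist of the factors; Proposition \ref{splitting isometry} is precisely what rules this out in the constant-curvature case, so this is why the argument works here but would require a substitute in the Cahen--Wallach case.
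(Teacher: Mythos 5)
Your proposal is correct and assembles the pieces exactly as the paper intends: the corollary is stated without a separate proof precisely because it is the direct combination of Corollary \ref{direct product}, Proposition \ref{injectivity frances}, and the isometry-group splitting of Proposition \ref{splitting isometry}, with the $\Gamma$-action inherited from the deck action via the now-injective developing map. Your closing remark about why the splitting is essential (and fails to transfer verbatim to the Cahen--Wallach case) matches the paper's own organization, which handles that case separately via Lemma \ref{Lemma: CW splitting}.
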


\section{The case of $\AdS$ or $\dS$ Lorentz factor} In this section, $X_{\kappa}$ denotes the Lorentz universal model of constant sectional curvature $\kappa=\pm 1$. That is, $X_{\kappa}$ either $\widetilde{\AdS}^{1,n+1}$ or $\dS^{1,m+1}$ for $m\geq 1$.
\begin{pr}\label{complete ads} Let $M$ be a compact Lorentz manifold locally isometric to $X_{\kappa}\times Y$ where $Y$ is a homogeneous Riemannian manifold. Then $M$ is complete.
\end{pr}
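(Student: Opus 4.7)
By Corollary \ref{boundaries}, we have $\widetilde{M} = \Omega \times Y$ with $\Gamma \subset \Isom(X_\kappa) \times G$ acting freely, properly, and cocompactly, and $\Omega$ is either all of $X_\kappa$ or a convex open subdomain bounded by one or two disjoint lightlike hyperplanes. Since completeness of $M$ is equivalent to $\Omega = X_\kappa$, the plan is to argue by contradiction: assume $\Omega \subsetneq X_\kappa$ and derive a contradiction from the cocompactness of the $\Gamma$-action.

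Under this assumption, the projection $\Gamma_L \subset \Isom(X_\kappa)$ of $\Gamma$ to the first factor preserves both $\Omega$ and its boundary, so it lies in a proper closed subgroup $P \subset \Isom(X_\kappa)$. For both $X_\kappa = \widetilde{\AdS}^{1,n+1}$ and $X_\kappa = \dS^{1,m+1}$, the stabilizer of a lightlike hyperplane is (up to conjugation) a maximal parabolic subgroup, and the stabilizer of two disjoint lightlike hyperplanes is the intersection of two opposite maximal parabolics. In either case $P$ is amenable and has a non-trivial abelian unipotent radical whose orbits on $\Omega$ are horocyclic parallels to the boundary. The plan is then to replay the Carri\`ere--Klingler limit-ellipsoid dynamics used in Proposition \ref{injectivity frances}, but this time in order to exclude the boundary-having case entirely.

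Concretely, I would take a sequence of points $\tilde{v}_n$ in a Lorentzian leaf of $\widetilde{M}$ whose images $D(\tilde{v}_n)$ project in $\Omega$ toward $\partial \Omega$, and use cocompactness to find $\alpha_n \in \Gamma$ returning them to a fixed compact fundamental domain. Exactly as in the proof of Proposition \ref{injectivity frances}, after left-- and right-multiplying each $\alpha_n$ by elements of a fixed compact neighborhood of the identity, one can arrange that the $G$-component of $\alpha_n$ remains bounded, so that the full divergence concentrates in the $\Isom(X_\kappa)$-component lying in $P$. The Cartan decomposition of that component then forces the associated sequence of small ellipsoids in $X_\kappa$ to collapse onto the lightlike boundary $\partial \Omega$, which is incompatible with $\Omega$ being a proper open subset supporting a cocompact Lorentz dynamics. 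The main obstacle is precisely this last step: in the Carri\`ere--Klingler setting the contradiction follows directly from ``discompacity one'', but in the present product situation one must verify that the Riemannian factor $Y$ cannot absorb the Lorentz divergence. The homogeneity of $Y$ and the properness of the $G$-action on $Y = G/K$ are exactly what allow one to shunt the compact part of the holonomy into $G$ and isolate the parabolic Lorentz dynamics on $\Omega$, so that the classical degeneration argument from \cite{klingler1996completude, francescoarse} carries through essentially verbatim.
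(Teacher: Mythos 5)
Your reduction is correct and matches the paper: by Corollary \ref{boundaries} one has $\widetilde{M}=\Omega\times Y$ with $\Gamma\subset\Isom(X_\kappa)\times G$ acting properly cocompactly, and completeness amounts to excluding the case where $\Omega$ is a proper convex domain bounded by one or two lightlike hyperplanes. The problem is your final step. The Carri\`ere--Klingler limit-ellipsoid / discompacity-one argument is exactly the mechanism that \emph{produces} Proposition \ref{injectivity frances}, i.e.\ the convexity of $\Omega$ and the description of $\partial\Omega$; it does not, by itself, rule out the proper-domain cases. Your claim that the ellipsoids ``collapse onto the lightlike boundary, which is incompatible with $\Omega$ being a proper open subset supporting a cocompact Lorentz dynamics'' is an assertion, not an argument: degenerating ellipsoids accumulating on $\partial\Omega$ are perfectly consistent with a proper cocompact action on $\Omega\times Y$. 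Indeed, the paper's Example on transversally affine $\AdS$-foliations exhibits $\Sol\subset\Aff^+(\R)\times\Aff^+(\R)$ acting freely, transitively, properly and cocompactly (after passing to a lattice) on $\mathcal{O}\times\R$, where $\mathcal{O}\subsetneq\widetilde{\AdS}^{1,1}$ is precisely a domain bounded by two parallel lightlike geodesics. So no purely dynamical contradiction of the kind you invoke can exist; the obstruction must come from the $\Gamma$-invariant volume on $\Omega\times Y$ (equivalently, from $G=\Isom(Y)$ being the isometry group of a Riemannian metric rather than an arbitrary transverse structure), and your proof never uses it.

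The paper's actual proof, following \cite[Subsect.~6.3.1]{francescoarse}, is volume-theoretic: if $\partial\Omega$ is a single lightlike hyperplane (the $\dS$ case), or two parallel ones, then $M=\Gamma\backslash(\Omega\times Y)$ inherits a complete vector field with non-zero constant divergence, impossible on a compact manifold with an invariant volume; if the two boundary hyperplanes are not parallel (the $\AdS$ case), one instead gets a well-defined unbounded function on $M$, again contradicting compactness. To repair your proof you would need to replace the last paragraph by a construction of one of these invariant objects (or some other use of unimodularity), rather than a second pass through the discompacity argument.
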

\begin{proof}
    As in Proposition \ref{splitting isometry} we have $\Isom(\Omega\times Y)=\Isom(\Omega)\times\Isom(Y)$. Applying \cite[Subsect. 6.3.1]{francescoarse} we get our desired conclusion.
\end{proof}

The ideas in \cite[Subsect. 6.3.1]{francescoarse} can be briefly summarized as follows. Suppose that the boundary of \(\Omega\) consists of a single lightlike hyperplane (this is the case for $\kappa=1$). Then \(M = \Gamma \backslash (\Omega \times Y)\) admits a complete vector field (possibly defined on only a strict open subset) with non-zero constant divergence, which contradicts the compactness of \(M\).\\
If the boundary of \(\Omega\) is the disjoint union of two lightlike hyperplanes (so \(\kappa = -1\)), then either they are parallel, in which case \(M\) again admits a complete vector field with nonzero constant divergence, or they are not parallel, in which case there exists a well-defined unbounded function on \(M\). In both cases, this contradicts the compactness of \(M\).\\

It is worth noting that, in the absence of a global invariant volume, we have the following counterexample
\begin{example}[Transversally affine $\AdS$-foliation]
    The group $\widetilde{\SO}(1,2)\times \Aff(\R)$ acts transitively on $\widetilde{\AdS}^{1,1}\times \R$. The subgroup $$\Aff^+(\R)\times \Aff^+(\R)\subset \widetilde{\SO}(1,2)\times \Aff(\R)$$
    has an open orbit $\mathcal{O}\times\R\subset \widetilde{\AdS}^{1,1}\times \R$ where $\Aff^+(\R)\subset \widetilde{\SO}(1,2)$ acts freely transitively on $\mathcal{O}$ (actually $\partial \mathcal{O}$ consists of two parallel lightlike geodesics). There is an embedding of $\Sol=\SO^o(1,1)\ltimes \R^2$ into $\Aff^+(\R)\times \Aff^+(\R)$ with a free transitive action on $\mathcal{O}\times\R$. Taking a cocompact lattice $\Gamma\subset \Sol$, we obtain a compact quotient $\Gamma\backslash(\mathcal{O}\times\R)$ with incomplete anti de-Sitter leaves (in fact the leaves are transversally affine in the sense of \cite[p. 36]{carriere1984flots} or \cite{blumenthal}).
\end{example}

\subsection{The de Sitter case: Calabi--Markus phenomenon} 
\begin{pr}\label{general foliated Calabi--Markus} Let $M$ be a compact locally symmetric Lorentz manifold. Then the (local) indecomposable Lorentz factor of $M$ is not a de Sitter space $\dS^{1,n}$, $n\geq 2$.
\end{pr}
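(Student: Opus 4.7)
The plan is to derive a contradiction by combining Section~\ref{Section: symmetric}--4's completeness results with the classical Calabi--Markus phenomenon and a fiber-bundle argument on $M$. Suppose for contradiction that there exists a compact locally symmetric Lorentz manifold $M$ whose indecomposable Lorentz factor is $\dS^{1,n}$ with $n\geq 2$.

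First, I would reduce to a standard form. Since $\dS^{1,n}$ has constant sectional curvature, Proposition \ref{splitting isometry} together with Corollaries \ref{direct product} and \ref{boundaries} give $\widetilde{M}=\widehat{\Omega}\times Y$ with $\widehat{\Omega}\subset\dS^{1,n}$ a convex open subset, $Y$ a simply connected Riemannian symmetric space with isometry group $G$, and $\Gamma=\pi_1(M)\subset\Isom(\dS^{1,n})\times G$ acting freely, properly discontinuously, and cocompactly. By the proof of Proposition \ref{complete ads} specialized to the de Sitter case $\kappa=+1$, the only non-trivial possibility for $\widehat{\Omega}$ is a convex set bounded by a single lightlike hyperplane; this is excluded by Frances's divergence argument since $M$ is compact. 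Hence $\widehat{\Omega}=\dS^{1,n}$ and $\widetilde{M}=\dS^{1,n}\times Y$ globally.

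Next I would analyze the two projections $\pi_1,\pi_2$ of $\Gamma$. On one hand, $\ker\pi_2=\Gamma\cap(\Isom(\dS^{1,n})\times\{1\})$ embeds via $\pi_1$ into $\Isom(\dS^{1,n})$ and acts properly discontinuously on $\dS^{1,n}$ alone (testing the $\Gamma$-action on $\widetilde{M}$ with compacts of the form $K_1\times\{y_0\}$). The classical Calabi--Markus phenomenon \cite{carriere1989autour} for $\dS^{1,n}$ with $n\geq 2$ therefore forces $\ker\pi_2$ to be finite. On the other hand, I would apply Kobayashi's properness criterion to the reductive pair $(\O(1,n+1)\times G,\O(1,n)\times K)$, where $K\subset G$ is a maximal compact subgroup: since $\O(1,n+1)$ and its point stabilizer $\O(1,n)$ both have real rank $1$ while $K$ is compact, the Cartan projection of the isotropy in the dominant Weyl chamber of $\O(1,n+1)\times G$ equals $\mathfrak{a}_1^+\oplus\{0\}$, and the criterion forces the $G$-Cartan projection of $\Gamma$ to be proper. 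Consequently $\pi_2:\Gamma\to G$ is a proper map, and in particular $\pi_2(\Gamma)\subset G$ is a discrete subgroup.

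Finally, the $\Gamma$-equivariant projection $\widetilde{M}\to Y$ descends to a continuous surjection $\bar{p}:M\to\pi_2(\Gamma)\backslash Y$, whose base is compact as the continuous image of the compact $M$. Around a generic point $[y_0]$ with trivial $\pi_2(\Gamma)$-stabilizer, one chooses a small open $V\subset Y$ around $y_0$ whose $\pi_2(\Gamma)$-translates are pairwise disjoint; unwinding the $\Gamma$-action on $\dS^{1,n}\times V$ then yields $\bar{p}^{-1}(\bar{V})\cong(\ker\pi_2\backslash\dS^{1,n})\times V$, where $\bar{V}\subset\pi_2(\Gamma)\backslash Y$ is the image of $V$. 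Since $\ker\pi_2$ is finite and $\dS^{1,n}$ is non-compact, $\bar{p}^{-1}(\bar{V})$ is a non-compact open subset of $M$, contradicting our compactness assumption. The subtle step is the invocation of Kobayashi's criterion to force the discreteness of $\pi_2(\Gamma)$; once this is in hand, the fiber-bundle argument yields the contradiction cleanly.
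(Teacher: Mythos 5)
Your reduction to $\widetilde{M}=\dS^{1,n}\times Y$ with $\Gamma\subset\Isom(\dS^{1,n})\times G$ acting properly discontinuously and cocompactly matches the paper's setup, and your treatment of the discrete case is sound in substance. The genuine gap is in the step that forces $\pi_2(\Gamma)$ to be discrete. Kobayashi's properness criterion applies to \emph{reductive} pairs, but $G=\Isom(Y)$ need not be reductive: in the de Rham--Wu decomposition the Riemannian part $Y$ may contain a nontrivial maximal flat factor $Y^0\cong\R^k$, so that $G$ contains $\Euc(k)=\O(k)\ltimes\R^k$ as a direct factor, and $\Euc(k)$ is not reductive (its radical $\R^k$ is not central). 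In that case there is no Cartan projection in the sense required by the criterion, and your argument does not exclude a non-discrete $\pi_2(\Gamma)$, e.g.\ one whose closure has a positive-dimensional solvable identity component inside $\Euc(k)$. The paper handles exactly this possibility by a direct geometric argument valid for an \emph{arbitrary} connected Lie group $G$: if $\widehat{\Gamma}=\pi_2(\Gamma)$ is non-discrete, choose a compact $K\subset G$ and infinitely many distinct $\widehat{\gamma}_n\in\widehat{\Gamma}$ with $\widehat{\gamma}_nK\cap K\neq\varnothing$, lift them to $\gamma_n\in\Gamma$, and use that any two spacelike totally geodesic hyperspheres in $\dS^{1,n}$ ($n\geq 2$) intersect; then $\gamma_n(S\times K)\cap(S\times K)\neq\varnothing$ for all $n$, contradicting properness. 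If you wish to keep the Kobayashi route you must at least split off and separately treat the Euclidean factor of $\Isom(Y)$, which is precisely the non-reductive part the criterion cannot see.

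Two smaller points. First, your closing contradiction is misstated: a non-compact \emph{open} subset of a compact manifold is no contradiction at all (remove a point). What you want is the \emph{fiber} $\bar{p}^{-1}([y_0])$, which is closed in $M$ (the quotient $\pi_2(\Gamma)\backslash Y$ is Hausdorff once $\pi_2(\Gamma)$ is discrete and acts properly on $Y$), hence compact, yet homeomorphic to $\Gamma_{y_0}\backslash\dS^{1,n}$ with $\Gamma_{y_0}$ finite, hence non-compact; this is in effect the paper's combination of Lemma \ref{useful lemma} with the classical Calabi--Markus theorem. Second, the isotropy of a point of $Y$ is a maximal compact subgroup of $G$ only when $Y$ has no compact factors; this is harmless for the Cartan-projection computation (only compactness of the isotropy is used) but should be stated correctly.
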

\begin{proof}
    We argue as in the original proof of \cite{calabimarkus}. Recall that any two spacelike totally geodesic spheres (of codimension $1$) in $\dS^{1,n}$ intersect non-trivially. Assume there is $\Gamma\subset \SO(1,n+1)\times G$ acting properly discontinuously and cocompactly on $\dS^{1,n}\times G$ where $G$ is any connected Lie group. If the projection of $\Gamma$ to $G$, denoted $\widehat{\Gamma}$, is discrete then $\Gamma\cap \SO(1,n+1)$ acts properly cocompactly on $\dS^{1,n}$ (see Lemma \ref{useful lemma}) which is impossible by \cite{calabimarkus}. Hence $\widehat{\Gamma}$ is non-discrete which implies that there is a compact $K\subset G$ and an infinite sequence $\widehat{\gamma}_n$ of distinct elements of $\widehat{\Gamma}$ such that $\widehat{\gamma}_nK\cap K$ is non-empty for all $n$. Let $\gamma_n$ be a lift of $\widehat{\gamma}_n$ to $\Gamma$ and $S\subset \dS^{1,n}$ be a spacelike totally geodesic hypersphere. It follows that $\gamma_n(S\times K)\cap (S\times K)$ is non-empty for all $n$ which contradicts the fact that $\Gamma$ acts properly on $\dS^{1,n}\times G$. To conclude the proof of Proposition \ref{general foliated Calabi--Markus}, observe that if $\Gamma\subset \SO(1,n+1)\times G$ acts properly discontinuously and cocompactly on a symmetric space $\dS^{1,n}\times Y$ then so it does on $\dS^{1,n}\times G$ which we showed to be impossible.
\end{proof}

\subsection{The foliated setting} Theorem \ref{complete ads} shows that a compact manifold $M$ locally modeled on the geometry $\bigl(\Isom(X_{\kappa})\times\Isom(Y),X_{\kappa}\times Y \bigr)$ is complete where $\kappa=\pm 1$ and $Y$ is any homogeneous Riemannian manifold. In particular, the leaves of the $X_{\kappa}$-foliation on $M$ are (immersed) complete Lorentz manifolds of constant curvature $\kappa$.\\

One naturally asks whether a merely foliated version still holds. More precisely, suppose that $M$ is a compact manifold endowed with an $X_{\kappa}$-foliation $\F$, i.e. each leaf of $\F$ is endowed with an $X_{\kappa}$-structure in a continuous way (see for example \cite[Def. 2.3]{hanounah2025completeness} or \cite{inaba1993tangentially}). Is it true that the leaves of $\F$ are complete? If one does not impose any ``transverse'' property on the foliation $\F$, then the answer to the question is negative as Example \ref{incomplete dS} shows.

On the other hand, the Calabi--Markus phenomenon still holds in a merely foliated setting (even for laminations!). More precisely, a compact manifold $M$ does not admit a foliation by \textit{complete} $\dS^{1,n}$-leaves. Let us give a sketch of the idea of the proof. Suppose by contradiction that $M$ admits such a foliation $\F$. By the standard Calabi--Markus phenomenon, we know that each leaf of $\F$ is not compact. Denote by $\overline{\F(p)}$ the topological closure of a leaf $\F(p)$ and let $y\in \overline{\F(p)}$ be a point that does not belong to $\F(p)$. We denote by $\operatorname{Gr_n^{+}}(\F)$ the Grassmannian of spacelike $n$-planes tangent to the foliation $\F$. One can find a sequence $P_{y_i}\in \operatorname{Gr_n^{+}}(\F)$ that converges to $P_y\in \operatorname{Gr_n^{+}}(\F)$ where $y_i\in \F(p)$ converges to $y$. Each $P_{y_i}$ is tangent to a unique totally geodesic spacelike hypersphere $S_{i}\subset\F(p)$. The limit hypersphere $S$, tangent to $P_y$, lives in a different leaf $\F(y)$. Hence one can find a neighborhood $N(S)$ of $S$ that separates $S_1$ from $S$. But eventually, $S_i$ is contained in $N(S)$. This is impossible since each $S_i$ intersects $S_1$ which shows that such a foliation cannot exist. The same ideas generalize to laminations, i.e. there cannot be a lamination by complete de Sitter leaves.

The completeness assumption of the leaves is crucial as the following example shows.

\begin{example}[An incomplete de Sitter foliation]\label{incomplete dS} Let $\SO^o(1,2)\subset\SO^o(1,3)$ be the isotropy subgroup of a point in $\dS^{1,2}$ and $P=\R\ltimes \R^2\subset\SO^o(1,3)$ be a Borel subgroup (i.e., $P=AN$ in the $KAN$ decomposition of $\SO^o(1,3)$). The subgroup $\SO^o(1,2)$ can be conjugated inside $\SO^o(1,3)$ so that it becomes transverse to $P$. In other words, $P$ has an open orbit when acting on $\dS^{1,2}$. This open orbit is necessarily strict, i.e. $P$ does not act transitively on $\dS^{1,2}$ (equivalently, $P$ has a left invariant incomplete Lorentz metric of positive constant curvature). Consider the left action of $P$ on $\SO^o(1,3)/\Gamma$ where $\Gamma\subset\SO^o(1,3)$ is a cocompact lattice. The orbits of this action define a foliation by incomplete $\dS^{1,2}$-leaves.
    
\end{example}

\section{The case of flat Lorentz factor} It remains to treat the case where the maximal flat factor of $X$ is Lorentz. Namely, $X=\mathbb{R}^{1,n+1}\times Y$ where $Y=G/K$ is a Riemannian symmetric space with a (algebraic) semisimple isometry group $G$ and $K\subset G$ is a maximal compact. In the particular situation where the Riemannian symmetric factor $Y$ is trivial, Carri\`ere showed that the developing  map $D:\widetilde{M}\to \R^{1,n+1}$, of a compact manifold $M$ modeled on $X$ (i.e. flat), is either a diffeomorphism (i.e. $M$ is complete) or an embedding onto an open convex subset $C_n\subset \R^{1,n+1}$ which is either a half-Minkowski space whose boundary is a lightlike hyperplane or the boundary of $C_n$ consists of two parallel lightlike hyperplanes. He then eliminated the latter two cases using a result of Goldman and Hirsch \cite{Goldmanhirsch2} that asserts that the (affine) holonomy of a compact affine manifold with a parallel volume is irreducible, i.e. does not preserve a proper affine subspace. This argument does not extend to our product situation. In fact, Frances pointed out the same observation \cite[Subsect. 6.3.2]{francescoarse} in the warped product setting where he showed the completeness assuming further dynamical properties on the isometry group of the compact manifold.\\

We have seen in Corollary \ref{boundaries} that if $M$ is a Lorentz manifold locally isometric to a symmetric space $\R^{1,n+1}\times Y$ then either $\widetilde{M}$ is complete i.e. isometric to $\R^{1,n+1}\times Y$, or $\widetilde{M}$ is isometric $C_n\times Y$ where $C_n\subset \R^{1,n+1}$ is an open convex subset whose boundary is a lightlike hyperplane or the disjoint union of two parallel lightlike hyperplanes. The latter case is easier to eliminate (see the lemma below). Thus, our problem reduces to the half-Minkowski case which turned out to be more involved.
\begin{lemma}\label{Lemma: half-Minkowski image}
   $C_n$ cannot be bounded between two parallel lightlike hyperplanes.
\end{lemma}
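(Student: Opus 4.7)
The plan is to extract from the parallel-strip structure of $C_n$ a bounded non-negative smooth function on the compact quotient $M$ and rule it out by a critical-point argument. Write $C_n = \{a_1 < \ell < a_2\} \subset \R^{1,n+1}$, where $\ell$ is an affine functional whose level sets are the two parallel lightlike hyperplanes in $\partial C_n$. Set $c = (a_1+a_2)/2$, $f := \ell - c$, and $F := f^2$; this is a bounded, non-negative function on $C_n$ vanishing exactly on the midhyperplane $\{f = 0\}$.

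The first step would be to show that $F$, pulled back to $\widetilde{M} = C_n \times Y$ via the first projection, descends to a smooth function $F_M$ on $M$. For this, observe that $\Gamma \subset \Isom(\R^{1,n+1}) \times G$ preserves $C_n$ via its first component $\gamma \mapsto \gamma_1$, so $\gamma_1$ permutes the unordered pair $\{H_1, H_2\}$. A short linear-algebra verification shows that any affine isometry of $\R^{1,n+1}$ permuting $\{H_1, H_2\}$ satisfies either $\ell \circ \gamma_1 = \ell$ (preserving each $H_i$) or $\ell \circ \gamma_1 = -\ell + (a_1+a_2)$ (swapping them); in both cases $f \circ \gamma_1 = \pm f$, hence $F = f^2$ is $\Gamma$-invariant and descends to $F_M : M \to \R_{\geq 0}$.

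The second step is the critical-point argument on the compact quotient. A direct computation gives $dF = 2 f \, d\ell$, and $d\ell$ is a constant nowhere-vanishing $1$-form on $\R^{1,n+1}$, hence so is its pullback to $\widetilde{M}$. Since $M$ is compact, $F_M$ attains its maximum at some $p \in M$; at $p$ we have $dF_M(p) = 0$, so lifting to $\tilde{p} \in \widetilde{M}$ and using that $d\ell$ never vanishes forces $f(\tilde{p}) = 0$, whence $F(\tilde{p}) = 0$ and $F_M(p) = 0$. Therefore $\max F_M = 0$ while $F_M \geq 0$, so $F_M \equiv 0$. But $F$ is strictly positive on the nonempty open set $\{f \neq 0\} \times Y \subset \widetilde{M}$, yielding the desired contradiction.

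The main subtlety, which I expect is the reason the Goldman--Hirsch argument used in the purely flat case in \cite{carriere1989autour} is not directly available here, is that the holonomy $\rho$ takes values in the product $\Isom(\R^{1,n+1}) \times G$, so irreducibility of the affine part of $\rho$ is not a formal consequence of unimodularity of the product affine structure. The function $F = f^2$ is designed to bypass this: it is insensitive to the sign ambiguity in the action of $\Gamma$ on $\ell - c$, so only a short linear-algebra check is needed to ensure descent, rather than a full irreducibility theorem.
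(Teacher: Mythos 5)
Your proof is correct, and it takes a more hands-on route than the paper's. The paper argues group-theoretically: since $\Gamma_1=p_1(\Gamma)$ preserves both parallel lightlike hyperplanes, the homothety factor $\R$ in $\operatorname{LP}(n)\cong(\R\times\O(n))\ltimes\Heis_{2n+1}$ is killed, so $\Gamma_1\subset\O(n)\ltimes\Heis_{2n+1}$, and one then observes that $\bigl(\O(n)\ltimes\Heis_{2n+1}\bigr)\times G$ cannot act cocompactly on $C_n\times Y$ (implicitly because the lightlike coordinate $u$ is invariant under this group and ranges over a bounded open interval). Your argument makes exactly this hidden mechanism explicit: the affine functional transverse to the strip is preserved up to the affine reflection $\ell\mapsto -\ell+(a_1+a_2)$, so $F=(\ell-c)^2$ descends to the compact quotient, and the critical-point computation $dF=2f\,d\ell$ with $d\ell$ nowhere zero forces $F_M\equiv 0$, a contradiction. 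Two small comparative remarks: first, your squaring trick cleanly handles holonomy elements that swap the two boundary hyperplanes, a case the paper's phrasing ("$\Gamma_1$ preserves also $P_2$") glosses over and would strictly require passing to an index-two subgroup; second, your argument does not use the structure theory of $\operatorname{LP}(n)$ from the appendix at all, so it is more elementary and self-contained, at the cost of the short linear-algebra verification that $\ell\circ\gamma_1=\pm\ell+\mathrm{const}$ (which is immediate since the linear part of $\gamma_1$ preserves $\ker d\ell$ and hence scales $d\ell$, and the scaling factor is pinned to $\pm 1$ by the requirement that the set $\{a_1,a_2\}$ be preserved). Both proofs are sound; yours could even be stated as the one-line observation that the $\Gamma$-invariant function $F$ has image the half-open interval $[0,((a_2-a_1)/2)^2)$, so it cannot attain its supremum on a compact quotient.
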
 
\begin{proof}
Assume that $C_n$ is bounded by two parallel lightlike hyperplanes $P_1$ and $P_2$. The image $\Gamma_1\mathrel{\mathop:}=p_1(\Gamma)$, under the projection $p_1:  \Isom(\R^{1,n+1})\times G\to\Isom(\R^{1,n+1})$, preserves both $P_1$ and $P_2$. The stabilizer of $P_1$ is isomorphic to $(\R\times \O(n))\ltimes \Heis_{2n+1}$ (see Sect \ref{Ln}). Since $\Gamma_1$ preserves also $P_2$ then $\Gamma_1\subset \O(n)\ltimes \Heis_{2n+1}$. But this is impossible since $\left( \O(n)\ltimes \Heis_{2n+1}\right)\times G$ does not act cocompactly on $C_n\times Y$.
\end{proof}

Therefore, if $M$ is a compact locally symmetric Lorentz manifold modeled on $\R^{1,n+1}\times Y$, then either $M$ is complete or $M$ is the quotient of $C_n\times Y$ where $C_n$ is a half-Minkowski space whose boundary is a lightlike hyperplane. 

\subsection{Algebraic formulation}\label{Ln} 

\begin{defi}
  Let $P\subset \R^{1, n+1}$ be a lightlike hyperplane and $C_n$ a half-Minkowski space with boundary $P$. The subgroup $\operatorname{LP}(n)\subset\O(1,n+1)\ltimes\R^{1,n+1}$ of affine Lorentz transformations, leaving invariant the half-space $C_n$, is called the \textit{lightlike Poincar\'e group}. 
\end{defi} 
One can write the group $\operatorname{LP}(n)$ as (see Appendix \ref{lightlike poincare} for more details) \[\operatorname{LP}(n)\cong (\R\times \O(n))\ltimes \Heis_{2n+1}\]
where the $\R$-factor acts on the Heisenberg group $\Heis_{2n+1}$ by the exponential of a certain derivation of the Heisenberg algebra, we refer to Appendix \ref{lightlike poincare} for the exact computations.

\subsubsection{Half-Minkowski as a homogeneous space} The lightlike Poincar\'e group $\operatorname{LP}(n)$ acts transitively on half-Minkowski (without boundary), where the isotropy of a point is isomorphic to $\O(n)\ltimes J_n$ and $J_n\subset \Heis_{2n+1}$ an abelian (Lagrangian) subspace of dimension $n$ which does not intersect the center of $\Heis_{2n+1}$ (see Subsect. \ref{homogeneous half Minkowski}). Hence $$C_n=\operatorname{LP}(n)/\left(\O(n)\ltimes J_n\right)\cong \operatorname{LP}(n)/\Euc_n$$ 
where $\Euc_{n}\cong \O(n)\ltimes \R^{n}$ denotes the isometry group of the $n$-dimensional Euclid space. 

\subsubsection{Compact models}\label{Subsection: CK forms} The above discussions show that if $M$ is a compact Lorentz manifold locally isometric to a symmetric space $\R^{1,n+1}\times Y$ then either $M$ is complete or $M$ is the quotient of $C_n\times Y$ by a discrete subgroup $\Gamma\subset \operatorname{LP}(n)\times G$ acting properly and cocompactly on $C_n\times Y$. One observes that if such a $\Gamma\subset \operatorname{LP}(n)\times G$ exists then its action on $C_n\times G$ is also proper and cocompact. Thus, in order to prove the completeness of $M$, it suffices to eliminates this latter case. In fact, we prove the more general statement
\begin{theorem}\label{Theorem: no compact CK forms}
    Let $G$ be a semisimple real Lie group. Then there is \textbf{no} discrete subgroup $\Gamma\subset \operatorname{LP}(n)\times G$ acting  properly and cocompactly on $C_n\times G$.
\end{theorem}

\subsection{Proof of Theorem \ref{Theorem: no compact CK forms}} Put $R_n=\R\ltimes\Heis_{2n+1}\subset\operatorname{LP}(n)$ and let $$p_2:\operatorname{LP}(n)\times G=(\O(n)\times G)\ltimes R_n \to \O(n)\times  G$$ denote the quotient projection. We define $\Gamma_2\mathrel{\mathop:}=p_2(\Gamma)$.  

\begin{fact}[Auslander \cite{auslander}]\label{fact: auslander}
    Let $\Gamma$ be a discrete subgroup in $A\ltimes B$ where $A$ and $B$ are connected Lie groups with $B$ solvable. Then the identity component of the closure of the projection of $\Gamma$ to $A$ is solvable. 
\end{fact}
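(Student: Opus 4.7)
The plan is to derive the statement from the Kazhdan--Margulis--Zassenhaus lemma, coupled with a descent argument that uses the solvability of $B$ to gain control of the $B$-parts of elements of $\Gamma$ whose $A$-part is small. I would proceed in three stages.

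First I would normalize the setup. Let $L := \overline{\pi(\Gamma)} \subset A$ and $L^0$ its identity component. Since $L^0$ is an open subgroup of $L$ and $\pi(\Gamma)$ is dense in $L$, the subgroup $\Gamma_0 := \Gamma \cap \pi^{-1}(L^0)$ has $\pi(\Gamma_0)$ dense in $L^0$. Replacing $(\Gamma,A)$ by $(\Gamma_0, L^0)$, I may assume $A$ is connected and $\pi(\Gamma)$ is dense in $A$, and the goal becomes to prove that $A$ itself is solvable.

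Next I would apply the Kazhdan--Margulis--Zassenhaus lemma to the ambient Lie group $G = A \ltimes B$, obtaining a neighborhood $U$ of $e$ such that $N := \langle \Gamma \cap U \rangle$ is nilpotent. Then $\pi(N) = \langle \pi(\Gamma \cap U) \rangle$ is a nilpotent subgroup of $A$. The endgame is to show that $\pi(N)$ is dense in $A$: once this is established, since the closure in a Lie group of a nilpotent subgroup is nilpotent, $A$ itself becomes nilpotent, in particular solvable. By connectedness of $A$ this reduces to showing that $\pi(\Gamma \cap U)$ is dense in some neighborhood of $e \in A$.

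The main obstacle --- and the only place where the solvability of $B$ enters essentially --- is precisely this density claim. The difficulty is that an element $\gamma = (a,b) \in \Gamma$ with $a$ close to $e$ may, by discreteness of $\Gamma$, have $b$ arbitrarily large, so $\gamma$ itself need not lie in $U$. My plan is to descend along the derived series $B \supset B^{(1)} \supset \cdots \supset B^{(k)} = \{e\}$ via the commutator identity in the semidirect product,
\[
[(a,b),(a',b')] = \bigl([a,a'],\; b + a\cdot b' - (aa'a^{-1})\cdot b - [a,a']\cdot b'\bigr).
\]
Iterated commutators of $\Gamma$-elements whose $A$-parts are small contract the $A$-part superquadratically while progressively forcing the $B$-part into deeper stages of the derived series of $B$, by a careful bookkeeping of how the $A$-action on $B$ behaves modulo each $B^{(j)}$. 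After at most $k$ iterations the $B$-part is trapped in $B^{(k)} = \{e\}$, producing honest elements of $\Gamma \cap U$ whose $A$-parts, thanks to the assumed density of $\pi(\Gamma)$ in $A$, are dense in a neighborhood of the identity. Undoing the initial reduction then yields solvability of the original $L^0$. The commutator bookkeeping along this filtration is the technically delicate step, relying on how the $A$-action interweaves with the derived series of the solvable $B$.
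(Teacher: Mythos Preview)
The paper does not supply a proof of this statement; it is recorded as a known fact due to Auslander and invoked as a black box.

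Your proposal has a genuine gap in the commutator descent. You claim that taking $G$-commutators of elements with small $A$-part pushes the $B$-part into successively deeper terms $B^{(j)}$ of the derived series of $B$. But your own formula
\[
[(a,b),(a',b')] \;=\; \bigl([a,a'],\; b + a\cdot b' - (aa'a^{-1})\cdot b - [a,a']\cdot b'\bigr)
\]
shows that the $B$-part of a $G$-commutator is \emph{not} a $B$-commutator: it is governed by the $A$-action on $B$, not by the group law of $B$ alone. Concretely, take $B$ abelian (derived length one). Your scheme predicts the $B$-part becomes trivial after a single commutator, yet writing $a=\exp(\epsilon X)$, $a'=\exp(\epsilon X')$ one finds the $B$-part equal to $\epsilon\,(X\cdot b' - X'\cdot b)$ up to $O(\epsilon^{2})$ terms that still involve $b,b'$. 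This has no reason to be small when $b,b'$ are large, and discreteness of $\Gamma$ imposes no bound whatsoever on $b,b'$ once $a,a'$ are constrained to lie near $e$. Thus the $B$-part is not driven down the derived series of $B$, and the mechanism for manufacturing elements of $\Gamma\cap U$ fails already in the base case of your induction.

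Even setting this aside, the ``careful bookkeeping'' you allude to is not a detail: the error terms coupling the $A$-action to large $B$-parts do not respect the filtration $B\supset B^{(1)}\supset\cdots$, so there is no evident monovariant decreasing along your iteration. The standard proofs of Auslander's theorem proceed along quite different lines (structure theory of solvable groups, algebraic hulls, Levi decompositions) rather than via a direct Zassenhaus-type argument of this kind.
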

This implies that the identity component of the closure of $\Gamma_2$ is solvable. The following lemma will be useful several times later.

\begin{lemma}\label{useful lemma}

Let $G$ be a connected Lie group and $I\subset G$ be a connected closed Lie subgroup. Let $\Gamma\subset G$ be a discrete subgroup acting freely, properly, and cocompactly on $G/I$. Assume there is a normal closed and connected subgroup $H\subset G$ such that the projection of $\Gamma$ to $G/H$ acts freely, properly, and cocompactly on $(G/H)/(I/(I\cap H))$. Then $\Gamma\cap H$ acts freely, properly, and cocompactly on $H/(H\cap I)$.

\end{lemma}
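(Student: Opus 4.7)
The plan is to realize $(\Gamma\cap H)\backslash H/(H\cap I)$ topologically as the fiber over the basepoint of the natural continuous map
\[
\pi\colon \Gamma\backslash G/I \longrightarrow \bar\Gamma\backslash G/(HI),
\]
where $\bar\Gamma = \Gamma H/H$, and then deduce its compactness from the compactness of both the source and target of $\pi$. Freeness and properness will come essentially for free from the analogous properties of the $\Gamma$-action on $G/I$ by restricting to the subgroup $\Gamma\cap H$ and to the closed subspace $H/(H\cap I)\subset G/I$; the whole substance of the lemma lies in cocompactness.

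I would begin with the following preliminaries. Since $\bar\Gamma$ acts properly on $(G/H)/(I/(I\cap H)) = G/(HI)$, this quotient is Hausdorff, which forces $IH/H$ to be closed in $G/H$, and hence $HI$ to be closed in $G$. Consequently the orbit $HI/I$ is a closed submanifold of $G/I$, and the canonical map $H/(H\cap I)\to HI/I$ is a diffeomorphism, so $H/(H\cap I)$ is realized as a closed submanifold of $G/I$. Restricting the free, proper $\Gamma$-action on $G/I$ to this closed subspace and to the subgroup $\Gamma\cap H\subset\Gamma$ yields freeness and properness of $\Gamma\cap H$ on $H/(H\cap I)$.

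The algebraic key point is the identity $\Gamma\cap HI = \Gamma\cap H$: if $\gamma\in\Gamma\cap HI$ then its image $\bar\gamma\in\bar\Gamma$ fixes the basepoint $[eHI]\in G/(HI)$, so by freeness of the $\bar\Gamma$-action one has $\bar\gamma=e$ and $\gamma\in\Gamma\cap H$. Using this identity, the natural map sending $(\Gamma\cap H)\cdot h(H\cap I)$ to $\Gamma\cdot hI$ is well-defined and injective, producing a continuous injection
\[
(\Gamma\cap H)\backslash H/(H\cap I) \hookrightarrow \Gamma\backslash G/I
\]
whose image is exactly the fiber $\pi^{-1}([eHI])$. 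Since $\pi$ is a continuous map between compact Hausdorff spaces it is proper, so this fiber is compact.

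The main (and essentially the only non-trivial) obstacle is to promote the set-theoretic identification with $\pi^{-1}([eHI])$ to a homeomorphism, so that compactness transfers to $(\Gamma\cap H)\backslash H/(H\cap I)$. For this I would use the fiber-bundle structure of $G/I\to G/(HI)$ (which exists since $HI$ is closed in $G$) combined with the slice theorem for the free, proper $\Gamma$- and $\bar\Gamma$-actions: a slice around $[eHI]\in \bar\Gamma\backslash G/(HI)$ lifts to a local trivialization of $\pi$ whose fibers are precisely small neighborhoods in $HI/I\cong H/(H\cap I)$, modded out by the local $(\Gamma\cap H)$-action. This exhibits the injection above as a bijective local homeomorphism, hence as a homeomorphism onto its compact image, concluding the proof.
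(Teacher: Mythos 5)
Your proposal is correct and follows essentially the same route as the paper: both realize $(\Gamma\cap H)\backslash H/(H\cap I)$ as a (closed, hence compact) fiber of the induced map $\Gamma\backslash(G/I)\to\widehat{\Gamma}\backslash(\widehat{G}/\widehat{I})$ between the two compact quotients. Your write-up merely makes explicit the algebraic identity $\Gamma\cap HI=\Gamma\cap H$ and the point-set identification of the fiber that the paper's terser proof leaves implicit.
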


\begin{proof}
    Denote by $\widehat{\Gamma}$ and $\widehat{I}$ the projections of $\Gamma$ and $I$ to $\widehat{G}\mathrel{\mathop:}=G/H$. By assumption, the quotient spaces $\Gamma\backslash(G/I)$ and $\widehat{\Gamma}\backslash(\widehat{G}/\widehat{I})$ are compact manifolds. The natural projection $\pi:G\to \widehat{G}$ induces a submersion $\overline{\pi}:\Gamma\backslash(G/I)\to\widehat{\Gamma}\backslash(\widehat{G}/\widehat{I})$. Hence $\overline{\pi}$ has closed fibers given by the projections of the cosets of $H/(H\cap I)$ to $\Gamma\backslash(G/I)$. These fibers are closed submanifolds which means that $\Gamma\cap H$ acts freely, properly, and cocompactly on $H/(H\cap I)$.
\end{proof}

Now, let $H\mathrel{\mathop:}=\overline{\Gamma_2}$ be the closure of $\Gamma_2$ and $H^o$ be its connected identity component. The group $H$ is a cocompact subgroup of the algebraic group $\O(n)\times  G$. If $H^o=\{e\}$ then Lemma \ref{useful lemma} implies that $\Gamma\cap \operatorname{LP}(n)$ acts freely, properly, and cocompactly on $C_n$ which is impossible. Hence $H^o$ is a non-trivial connected solvable subgroup of $\O(n)\times  G$. Let $H^{oo}$ be the projection of $H^o$ to $G$. Similarily, if $H^{oo}$ is trivial we get that  $\operatorname{LP(n)\cap \Gamma}$ acts freely, properly, and cocompactly on $C_n$ which is impossible.

\textbf{Assume for the sake of contradiction that $H^{oo}$ is non-trivial}. Since $\O(n)$ is compact, the projection to $G$ is a proper map. In particular, $H^{oo}$ is  a non-trivial, closed solvable subgroup of $G$. \\

Denote by $N=N(H^{oo})$ the normalizer in $G$ of the identity component $H^{oo}$. Hence $N$ is an algebraic group. Since the identity component  $N^o$ is of finite index in $N$ and $N$ is cocompact in $G$, then $N^o$ is a cocompact connected closed Lie subgroup of $G$.

\begin{claim}\label{Claim: N^o is unimodular} 
    $N^o$ is a unimodular Lie group. 
\end{claim}

We recall the following fact:
\begin{pr}[Proposition 3.1 \cite{zeghib1998closed}]\label{Prop: modular distorsion}
Let $G$ be a Lie group and $H \subset G$ a closed connected subgroup, with Lie algebras
$\mathfrak g$ and $\mathfrak h$, respectively. For $g\in G$, define the modular distortion
\[
\Delta_G(g)=\det\!\bigl(\operatorname{Ad}(g)\vert_{\mathfrak g}\bigr),
\]
and, for $g$ normalizing $H$,
\[
\Delta_H(g)=\det\!\bigl(\operatorname{Ad}(g)\vert_{\mathfrak h}\bigr).
\]
Then:
\begin{enumerate}
\item If $G/H$ admits a nontrivial $G$-invariant measure, then for every $h\in H$, \[\Delta_G(h)=\Delta_H(h).\]

\item If, moreover, $G/H$ admits a finite-volume quotient $\Gamma\backslash G/H$,
where $\Gamma$ is discrete, then for every $g\in N_G(H)$,
\[
\Delta_G(g)=\Delta_H(g).
\]
\end{enumerate}
\end{pr}

\begin{proof}[Proof of Claim \ref{Claim: N^o is unimodular}]
    We assume up to taking a finite index subgroup of $\Gamma$ that $\Gamma\subset \operatorname{LP}(n)\times N^o$. Moreover, the homogeneous space $C_n\times N^o$ has a compact Clifford--Klein form. However, as $N^o$ normalizes the isotropy group $I=\O(n)\ltimes J_n$, we conclude using  in Proposition \ref{Prop: modular distorsion} point $(2)$,  that for any $n\in N^o$, $\det (\Ad(n_{|I})=\det (\Ad(n_{| \operatorname{LP}(n)\times N^o})$. However, as $N^o$ centralizes $I$ we deduce even that $\det (\Ad(n_{|I}))=1$. Hence, $\det(\Ad(n_{| \operatorname{LP}(n)\times N^o})=1$. Since  $\det(\Ad(n_{| \operatorname{LP}(n)\times N^o})=\det (\Ad(n_{| N^o} ))$, the lemma follows.
\end{proof}

\begin{cor}Let $o \in G/N^o$. The adjoint action of $N^o$ on the tangent space $T_o(G/N^o)$ preserves a finite invariant volume.\end{cor}\begin{proof}Let $n \in N^o$. Since $G$ is a semisimple Lie group, its adjoint representation is unimodular, meaning $\det(\Ad(n)|_{\mathfrak{g}}) = 1$. By the preceding Lemma, the restriction to the subgroup also satisfies $\det(\Ad(n)|_{\mathfrak{n}^o}) = 1$. Using the natural identification of the tangent space $T_o(G/N^o)$ with the quotient  $\mathfrak{g}/\mathfrak{n}^o$, the determinant splits as follows:$$\det(\Ad(n)|_{\mathfrak{g}}) = \det(\Ad(n)|_{\mathfrak{n}^o}) \cdot \det(\Ad(n)|_{\mathfrak{g}/\mathfrak{n}^o})$$Substituting the known values yields $\det(\Ad(n)|_{T_o(G/N^o)}) = 1$. This last equality is equivalent  to the existence of a left $G$-invariant volume form on the homogeneous space $G/N^o$. Finally, since $G/N^o$ is compact, this invariant volume is finite.\end{proof}
\begin{claim}
    $N^o$ is Zariski dense.
\end{claim}
\begin{proof}
    By the previous corollary we know that $G/N^o$ has a $G$-invariant finite volume. Since $N^o$ is closed we get by Borel density theorem \cite{borel1960density} that $N^o$ is Zariski dense.
\end{proof}

\paragraph{\textbf{The final contradiction}}
The above contradicts the fact that $N^o$ is the contained in the normalizer of the non-trivial connected subalgebra $H^{oo}$.
\begin{cor}
    Compact Lorentz manifolds that are  modeled on a symmetric Lorentz space with a flat Lorentzian factor of dimension $\geq 2$ are geodesically complete. 
\end{cor}

\section{The remaining cases}
As mentioned in the introduction, the geodesic completeness of compact locally symmetric Lorentz manifolds, whose indecomposable Lorentz factor $X^L$ is a Cahen--Wallach space or isometric to $(\R,-dt^2)$, is shown in \cite{Leistnercompletness} using the completeness result in \cite{mehidi2022completeness}. In this section, we give a slightly different proof.
\subsection{The case of Cahen--Wallach factor}

Let $X=\CW\times Y$ be a simply connected symmetric space where $Y$ is a symmetric Riemannian space and $\CW$ is a Cahen--Wallach space. Denote by $Y^0$ the maximal flat factor of $Y$, i.e. $Y=Y^0\times Y^1$ where $Y^1$ is a Riemannian symmetric space with a semisimple isometry group. Assume that the Cahen--Wallach factor $\CW$ is of dimension $n+2$, $n\geq1$, and write $X=\mathcal{P}\times Y^1$ where $\mathcal{P}\mathrel{\mathop:}=\CW\times Y^0$ is then a  \textit{decomposable symmetric plane wave}. 
\begin{lemma}
    The factor $\mathcal{P}$ has a unique, up to scale, parallel lightlike vector field.
\end{lemma}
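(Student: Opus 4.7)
The plan is to use the product structure of $\mathcal{P} = \CW \times Y^0$ and a de Rham--type decomposition of the space of parallel vector fields, then identify the lightlike ones by a signature computation on a finite-dimensional vector space.

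First I would recall that on any pseudo-Riemannian product $A \times B$ the holonomy at a point $(p_1,p_2)$ is $\Hol_{p_1}(A) \times \Hol_{p_2}(B)$, acting block-diagonally on $T_{p_1}A \oplus T_{p_2}B$. Under the standard bijection between parallel vector fields on a simply connected pseudo-Riemannian manifold and holonomy-invariant tangent vectors at a basepoint, the invariants split as a direct sum, so the space of parallel vector fields on $A \times B$ is the direct sum of the spaces of parallel vector fields on $A$ and on $B$.

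Next I would compute each factor. For $\CW$: by indecomposability (Fact~\ref{Fact: indecomposable}) no proper non-degenerate subspace of $T\CW$ is holonomy-invariant, which rules out any parallel non-null vector, since such a vector would span a non-degenerate invariant line. Combined with the uniqueness up to scale of the lightlike parallel vector field $V$ recorded in Remark~\ref{Remark: CW}, this yields that the space of parallel vector fields on $\CW$ is exactly $\R V$. For $Y^0$: being the maximal flat Riemannian factor of $Y$, it is isometric to some Euclidean $\R^k$ (with $k \geq 0$), whose parallel vector fields are the constant fields, forming a positive-definite $k$-dimensional space.

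Combining the two, the space of parallel vector fields on $\mathcal{P}$ is $\R V \oplus \R^k$, with the induced bilinear form vanishing on the $\R V$-summand, positive definite on the $\R^k$-summand, and with the two summands orthogonal. A nonzero element $(aV, w)$ then has squared norm $\|w\|^2$, which vanishes precisely when $w = 0$. Hence the lightlike parallel vector fields on $\mathcal{P}$ are exactly the nonzero scalar multiples of $V$, proving the claim. The only mildly delicate step is the clean justification that $\CW$ admits no parallel vector fields outside $\R V$; everything else is straightforward linear algebra.
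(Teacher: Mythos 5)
Your proof is correct and follows essentially the same route as the paper: decompose the parallel vector fields of $\mathcal{P}=\CW\times Y^0$ as $\R V\oplus\R^k$ via the product/holonomy splitting and observe that $\R V$ is the only lightlike line there, since the form is positive definite on the $\R^k$-summand. The only difference is that you spell out why $\CW$ carries no parallel fields beyond $\R V$ (indecomposability excludes non-null ones), a point the paper simply asserts.
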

\begin{proof}Let $V$ be the unique (up to scaling) parallel vector field on $\CW$. Assume that there is another parallel lightlike vector field $V'$. Then, the plane field $E$ generated by $V,V'$ is a parallel Lorentzian plane field. In particular, $E$ integrates to a foliation  $\mathcal{E}$ of $\mathcal{P}$ by Lorentzian surfaces. The leaves of  the foliation $\mathcal{E}$ are all flat, since they admit a basis of complete parallel (lightlike) vector fields. On the other hand, the orthogonal distribution $E^\perp$ is also parallel. So, it is integrable and totally geodesic.  However, as $E^\perp\subset V^\perp$,  and the leaves tangent to  $V^\perp$ are flat, since  $\mathcal{P}$ is a (symmetric) plane wave. It follows that the leaves tangent to $E^\perp$ are also flat. Now, decomposing $T\mathcal{P}$ as a the orthogonal product $E\times E^\perp$, as both parts are flat, it follows that $\mathcal{P}$ is flat which is a contradiction.
\end{proof}

\begin{fact}[Theorem 1.5 in \cite{hanounah2025homogeneous}]\label{isometry plane wave}
    The isometry group of a complete homogeneous plane wave $\mathcal{P}^{\prime}$ (indecomposable or not) of dimension $n+2$ that admits a unique parallel lightlike vector field is  isomorphic, up to finite index, to $(\R\times K) \ltimes \Heis_{2n+1}$ where $K\subset \O(n)$ is a compact connected subgroup. Moreover, the center of $\Heis_{2n+1}$ is central in a subgroup of index two in $\Isom(\mathcal{P}^{\prime})$. In addition, the flow of the (unique) parallel lightlike vector field is given by the action of center of $\Heis_{2n+1}$ on $\mathcal{P}^{\prime}$.
\end{fact}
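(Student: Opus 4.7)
The plan is to first use the uniqueness of the parallel lightlike vector field $V$ to extract a normal Heisenberg subgroup generated by the flow of $V$ and transverse transvections, and then identify the quotient with $\R \times K$ using the induced action on the transverse Riemannian structure.

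First, I would observe that every isometry $f$ of $\mathcal{P}'$ satisfies $f_*V = \lambda(f)\,V$ for some $\lambda(f) \in \R^*$, since $f_*V$ is again a parallel lightlike vector field and such fields form a line by hypothesis. The assignment $f \mapsto \lambda(f)$ is a continuous character of $\Isom(\mathcal{P}')$; using homogeneity together with the rigidity of the plane wave profile one restricts its image to $\{\pm 1\}$, exhibiting an index-at-most-two subgroup $H \subset \Isom(\mathcal{P}')$ that fixes $V$. In particular the flow $\Phi_t$ of $V$ generates a central one-parameter subgroup $Z_0 \subset H$. Next, I would extract the Heisenberg normal subgroup: the screen bundle $V^\perp / \langle V \rangle$ carries a well-defined flat Riemannian structure, and on each leaf of $V^\perp$, parallel transport combined with the plane wave data yields $2n$ germinal Killing fields $X_1, \ldots, X_n, Y_1, \ldots, Y_n$ satisfying Heisenberg commutation relations $[X_i, Y_j] = \delta_{ij} V$, all other brackets zero. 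Homogeneity and completeness of $\mathcal{P}'$ guarantee that these germinal Killing fields extend to complete global vector fields on $\mathcal{P}'$, so that together with $Z_0$ they integrate to a closed normal subgroup of $H$ isomorphic to $\Heis_{2n+1}$, whose center is exactly $Z_0$.

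The quotient $H^\circ / \Heis_{2n+1}$ then acts effectively and isometrically on the transverse $n$-dimensional Euclidean geometry and thus embeds in $\R \times \O(n)$: the $\R$-factor rescales the wave parameter and acts on $\Heis_{2n+1}$ by the usual Heisenberg homotheties, while the orthogonal factor is a compact connected subgroup $K \subset \O(n)$ (closedness being forced by its acting on a Riemannian structure and connectedness obtained by passing to the identity component). Putting everything together recovers $(\R \times K) \ltimes \Heis_{2n+1}$ up to finite index. The final assertion that the flow of $V$ coincides with the action of the center of $\Heis_{2n+1}$ on $\mathcal{P}'$ is then immediate from the construction.

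The main obstacle is producing the $2n$ transverse Heisenberg Killing fields as honest \emph{global, complete} vector fields on $\mathcal{P}'$ and verifying the exact commutation relations $[X_i, Y_j] = \delta_{ij} V$. This is precisely where the hypothesis of a \emph{unique} parallel lightlike vector field is indispensable: it pins down the central element of the Heisenberg algebra, and without it additional parallel null directions could enlarge the center and preclude the Heisenberg identification. The index-two statement is then extracted by combining the character $\lambda(f)$ argument above with the observation that elements of $H$ automatically commute with the flow of $V$.
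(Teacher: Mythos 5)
First, note that the paper does not prove this statement at all: it is quoted as an external result, namely Theorem~1.5 of \cite{hanounah2023homogeneous}, so there is no internal proof to compare your argument against. Judged on its own terms, your outline begins along the standard lines --- the character $f\mapsto\lambda(f)$ defined by $f_*V=\lambda(f)V$, and the Heisenberg algebra of Killing fields attached to solutions $b''=A(u)b$ of the Jacobi equation along the wave parameter, whose brackets produce the Wronskian times $V$ --- but it contains a genuine error in the identification of the $\R$-factor, which is the part of the statement that actually encodes homogeneity.

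You assert that the $\R$-factor ``rescales the wave parameter and acts on $\Heis_{2n+1}$ by the usual Heisenberg homotheties.'' A Heisenberg homothety scales the center of $\Heis_{2n+1}$ nontrivially, and by the last clause of the Fact the center acts by the flow of $V$; so a one-parameter group acting by homotheties would rescale $V$ by $e^{2t}$, contradicting both your own first step (the image of $\lambda$ is $\{\pm1\}$, so no connected group of isometries rescales $V$) and the clause you are meant to prove (the center of $\Heis_{2n+1}$ is central in an index-two subgroup of $\Isom(\mathcal{P}')$). The Heisenberg-homothety picture belongs to the lightlike Poincar\'e group $\operatorname{LP}(n)$ of half-Minkowski space (Appendix~\ref{lightlike poincare} of this paper), not to a plane wave with a \emph{unique} parallel lightlike field. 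The correct $\R$-factor is the one-parameter group of ``screw'' isometries translating the wave parameter $u$ (possibly composed with a rotation in the screen directions), whose existence is exactly what makes the complete plane wave homogeneous; it acts on $\heis_{2n+1}$ by the symplectic time-$s$ flow of the Jacobi equation and \emph{fixes} the center. Relatedly, your justification for $H^\circ/\Heis_{2n+1}\hookrightarrow\R\times\O(n)$ --- that the quotient ``acts isometrically on the transverse Euclidean geometry'' --- does not yield that conclusion, since $\Isom(\R^n)=\O(n)\ltimes\R^n$; the screen translations are already absorbed into $\Heis_{2n+1}$, and the $\R$ must come from homogeneity as above, not from the transverse action. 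Finally, the normality of the Heisenberg subgroup, the finiteness of the image of $\lambda$, and the completeness of the constructed Killing fields are asserted rather than derived; establishing them requires the classification of complete (regular) homogeneous plane waves and the structure of their Killing algebras, which is precisely the content of the cited theorem rather than something recoverable by the soft arguments sketched here.
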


\begin{lemma}\label{Lemma: CW splitting}
    We have $\Isom(X)=\Isom(\mathcal{P})\times \Isom(Y^1)$ where $X=\mathcal{P}\times Y^1$ is as above. 
\end{lemma}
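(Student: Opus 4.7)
The plan is to mimic Proposition~\ref{splitting isometry}, using the compact holonomy coming from $Y^1$ to detect the Riemannian factor inside $X=\mathcal{P}\times Y^1$. Fixing a point $p=(p_1,p_2)$, the holonomy decomposes as
\[
\Hol_p^X=\Hol_{p_1}^{\mathcal{P}}\times\Hol_{p_2}^{Y^1}.
\]
Since $Y^1$ is Riemannian symmetric with semisimple isometry group and trivial maximal flat factor, its holonomy at $p_2$ equals the full isotropy $K$, a compact Lie group acting on $T_{p_2}Y^1$ without non-zero fixed vectors. For the plane-wave side, $\mathcal{P}=\CW\times Y^0$ has holonomy equal to that of $\CW$ (the flat factor $Y^0$ contributing nothing); the standard description of the curvature of a Cahen--Wallach space shows that this holonomy is abelian and non-compact, hence contains no non-trivial compact subgroup.

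The first main step will be to use this to pin down $T_{p_1}\mathcal{P}$ canonically: the factor $\{1\}\times K$ is the unique maximal connected compact normal subgroup of $\Hol_p^X$, so the stabilizer $\Stab(p)\subset\Isom(X)$, which normalizes $\Hol_p^X$ by conjugation, must preserve $\{1\}\times K$ and hence its fixed subspace in $T_pX$. That fixed subspace is exactly $T_{p_1}\mathcal{P}$, because $K$ has no fixed vector in $T_{p_2}Y^1$ by the absence of a flat factor in $Y^1$; its orthogonal complement is $T_{p_2}Y^1$. Since the maximal totally geodesic submanifolds through $p$ tangent to these two subspaces are $\mathcal{P}\times\{p_2\}$ and $\{p_1\}\times Y^1$, each element of $\Stab(p)$ sends these two leaves to themselves.

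The second step will be to upgrade from $\Stab(p)$ to the full isometry group. The distribution tangent to the $\mathcal{P}$-foliation is parallel (its leaves being totally geodesic) and, by the holonomy characterization above, intrinsically defined at every point; since an isometry conjugates the holonomy group at $q$ into the holonomy group at $\phi(q)$, it must preserve this distribution globally, and similarly for its orthogonal. Consequently any $\phi\in\Isom(X)$ maps $\mathcal{P}$-leaves to $\mathcal{P}$-leaves and $Y^1$-leaves to $Y^1$-leaves; intersecting the two image leaves through a point $(x,y)$ forces $\phi(x,y)=(\phi_1(x),\phi_2(y))$ with $\phi_1\in\Isom(\mathcal{P})$ and $\phi_2\in\Isom(Y^1)$, giving the splitting $\Isom(X)=\Isom(\mathcal{P})\times\Isom(Y^1)$. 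The main obstacle in this plan is the very first structural input: verifying that the holonomy of the decomposable plane wave $\mathcal{P}$ has no non-trivial compact part, which rests on the explicit abelian form of the holonomy of a Cahen--Wallach space together with the flatness of $Y^0$.
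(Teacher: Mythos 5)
Your proposal is correct and follows essentially the same route as the paper: both split the holonomy as $\Hol_{p_1}\times\Hol_{p_2}$, use that $\Hol_{p_1}\cong\R^n$ (citing Leistner--Schliebner) while $\Hol_{p_2}=K$ is compact and fixed-point-free on $T_{p_2}Y^1$ to see that $\Stab(p)$ must preserve $K$ and hence its fixed subspace $T_{p_1}\mathcal{P}$, and then propagate the invariance of the splitting to all of $\Isom(X)$. The only nitpick is the inference ``abelian and non-compact, hence contains no non-trivial compact subgroup,'' which is not valid in general (consider $\R\times \SO(2)$); what you actually need, and what is true, is that the Cahen--Wallach holonomy is isomorphic to $\R^{n}$, so every compact subgroup of $\Hol_{p_1}\times K$ lies in $\{1\}\times K$.
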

\begin{proof}
    We proceed as in the proof of Proposition \ref{splitting isometry}. Let $p=(p_1,p_2)\in \mathcal{P}\times Y^1$. We have $\Hol_p=\Hol_{p_1}\times \Hol_{p_2}\subset \SO(T_{p_1}\mathcal{P})\times \SO(T_{p_2}Y^1)$ and it is enough to show that $\Stab(p)$ normalizes $\Hol_{p_2}$. Indeed, since $Y$ is a Riemannian symmetric space without a flat factor then its holonomy $\Hol_{p_2}$ coincides with the isotropy \cite[Proposition 10.79]{besse2007einstein}. On the other hand, $\Hol_{p_1}$ is abelian isomorphic to $\R^{n}$ (see \cite[Proposition 3]{leistner2016completeness}). So $\Stab(p)$ normalizes $\R^n\times \Hol_{p_2}$ which implies that it also normalizes $\Hol_{p_2}$. The fixed subspace of the $\Hol_{p_2}$-action is the subspace at $p$ tangent to $\mathcal{P}$. The rest is exactly as in the proof of Proposition \ref{splitting isometry}.
\end{proof}
\begin{cor}
    Let $M$ be a compact $(\Isom(X),X)$-manifold. Then $M$ is, up to double cover, a symmetric Brinkmann spacetime. In particular, $M$ is geodesically complete.
\end{cor}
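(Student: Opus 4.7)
The plan is to combine the product splitting from Lemma \ref{Lemma: CW splitting} with Fact \ref{isometry plane wave} to exhibit $M$, after passing to an at-most-double cover, as a compact symmetric Brinkmann spacetime, and then to invoke the Mehidi--Zeghib completeness theorem \cite{mehidi2022completeness}.

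First I would rerun the product-structure argument of Section 3 in this setting. Lemma \ref{Lemma: CW splitting} gives the splitting $\Isom(X)=\Isom(\mathcal{P})\times\Isom(Y^1)$, and $Y^1$ is a simply connected Riemannian symmetric space. Exactly as in Lemma \ref{compleness riemannian lemma} and Corollary \ref{direct product}, compactness of $M$ forces each leaf of the $Y^1$-foliation on $\widetilde{M}$ to be complete, and each such leaf develops isometrically and bijectively onto a Riemannian fiber $\{x\}\times Y^1$. This yields a global product description $\widetilde{M}\cong\widehat{\Omega}\times Y^1$, where $\widehat{\Omega}\subset\mathcal{P}$ is a connected simply connected open subset carrying the pulled-back homogeneous plane wave structure, and $\pi_1(M)$ embeds in $\Isom(\mathcal{P})\times\Isom(Y^1)$ via the holonomy representation.

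Next I would globalize the parallel lightlike vector field by passing to a double cover. Let $V$ denote the (up-to-scale unique) parallel lightlike vector field on $\mathcal{P}$; by Fact \ref{isometry plane wave} its flow is the action of the center of $\Heis_{2n+1}$, which is central in an index-two (hence normal) subgroup $H\subset\Isom(\mathcal{P})$. Since $V$ spans the unique parallel lightlike line and $\Isom(\mathcal{P})$ must act on this line through a character whose kernel is exactly $H$, it follows that $H$ preserves $V$ while the complementary coset sends $V$ to $-V$. Let $\rho_1:\pi_1(M)\to\Isom(\mathcal{P})$ be the $\mathcal{P}$-component of the holonomy, set $\Gamma':=\rho_1^{-1}(H)$ (of index at most two in $\pi_1(M)$), and form the cover $M':=\Gamma'\backslash\widetilde{M}$. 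The restriction of $V$ to $\widehat{\Omega}\times Y^1$ is $\Gamma'$-invariant and therefore descends to a globally defined parallel lightlike vector field on the compact manifold $M'$.

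Finally, since $M$ is locally symmetric, so is $M'$; combined with the parallel lightlike vector field just constructed, $M'$ is a compact symmetric Brinkmann spacetime. The Mehidi--Zeghib theorem \cite{mehidi2022completeness} then gives geodesic completeness of $M'$, and hence of $M$ as its finite quotient. The subtle step is the descent in the previous paragraph: one must verify that isometries of $\mathcal{P}$ act on $V$ by $\pm 1$ (so that the stabilizer of $V$ is genuinely an index-two normal subgroup), which is exactly what Fact \ref{isometry plane wave} provides through the centralizer structure of $Z(\Heis_{2n+1})$. Once $M'$ is identified as a symmetric Brinkmann spacetime, the appeal to \cite{mehidi2022completeness} is immediate.
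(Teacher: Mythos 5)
Your proof is correct and follows essentially the same route as the paper: combine Lemma \ref{Lemma: CW splitting} with Fact \ref{isometry plane wave} to obtain an index-two subgroup of $\Isom(X)$ preserving the parallel lightlike vector field, descend that field to a double cover of $M$, and invoke \cite{mehidi2022completeness}. The only difference is that your intermediate reconstruction of the product structure $\widetilde{M}\cong\widehat{\Omega}\times Y^1$ is not needed: the pullback $D^*V$ is already invariant under the index-two subgroup of $\pi_1(M)$ directly from the equivariance of the developing map.
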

\begin{proof}
    By Fact \ref{isometry plane wave} and Lemma \ref{Lemma: CW splitting}, an index-two subgroup of $\Isom(X)$ preserves a lightlike parallel vector field. Therefore, up to taking a double cover, $M$ is a Brinkmann spacetime which is complete due to \cite{mehidi2022completeness}.
\end{proof}
\subsection{The timelike case}
If $X=\R\times Y$ where $Y$ is Riemannian symmetric without a flat factor, then $\Isom(X)=\Isom(\R, -dt^2)\times \Isom(Y)$. Therefore, the isotropy of a point is $\Z/2\Z\times K$ which implies that $X$ admits an $\Isom(X)$-invariant Riemannian metric. In particular, any compact $(\Isom(X),X)$-manifold is complete. 
\begin{remark}[Timelike Killing fields]
    The splitting of the isometry group implies that any compact $(\Isom(X),X)$-manifold possesses, up to a double cover, a parallel timelike vector field, which is, in particular, a Killing field. Moreover, it is shown in \cite{romero1995completeness} that compact Lorentz manifolds admitting timelike Killing fields are necessarily complete.

\end{remark}

\section{Low dimensional compact locally symmetric Lorentz manifolds}
A $2$-dimensional compact connected locally symmetric Lorentz manifold is,  up to double cover, isometric to a Lorentz flat torus. In the $3$-dimensional case, the classification is more involved and in fact contained in the works of many people. For instance, when $M$ has  a constant curvature $\kappa$, it follows that $\kappa\leq 0$ by \cite{calabimarkus,klingler1996completude}. In the flat case, $M$ is up to finite cover a \textit{solvmanifold}, i.e. $M = \Gamma\backslash G$ where $G$ is a connected three dimensional unimodular solvable Lie group. The latter fact follows from \cite{carriere1989autour,fried1983three} which is extended to higher dimensions by \cite{goldman1984fundamental,carriere1989autour}, showing that all compact flat Lorentz manifolds are finitely covered by solvmanifolds. In the anti de Sitter case, there are many works around the Geometry and Topology of compact $\AdS^{1,2}$-manifolds, see for example \cite{Kulkarni,kulkarniraymond,goldmannonstandard, ghysAnosov,Mess,zeghibclosed,salein,gueritaud2015compact}.\\

In the non-constant curvature case, indecomposable symmetric spaces are Cahen--Wallach spaces. In \cite{KO} a systematic study of the existence of compact manifolds that are locally isometric to Cahen--Wallach spaces is achieved. In particular, there are no compact quotients of $3$-dimensional Cahen--Wallach spaces (see also \cite{DZ}).\\

So we are left only with the decomposable cases in dimension $3$. Namely, the symmetric spaces $\widetilde{\dS}^{1,1}\times \R$, $\R\times\mS^2$ and $\R\times\mathbb{H}^2$. Only the last two examples admit compact quotients. 

\subsection{The $4$-dimensional case}
\subsubsection{The flat case} In addition to the fact that compact flat Lorentz manifolds are up to finite cover solvmanifold \cite{goldman1984fundamental}, an (essential) classification of their fundamental groups (in every dimension) is established in \cite{grunewaldMargulis}. It follows, in particular, from \cite[Theorem 1.10]{grunewaldMargulis} that the fundamental group of a compact flat Lorentz $4$-manifold is (virtually) either nilpotent or isomorphic to $\Z\ltimes \Z^3$.
\subsubsection{The Cahen--Wallach case} Concerning their topology, compact Cahen--Wallach are not always standard, in particular, their isometry groups are not always algebraic! In fact, there are non-standard examples even in dimension $4$ \cite[Example 6.3]{hanounah2025topology}, in contrast to the $3$-dimensional situation. On the other hand, their fundamental groups are fairly understood. They are either virtually nilpotent (in this case they are nilmanifolds) or virtually isomorphic $\Z\ltimes \Gamma_0$ where $\Gamma_0$ is a lattice of $\Heis_3$ (see \cite[Section 8]{KO}).
\subsubsection{Anti de Sitter and de Sitter cases}
Due to a variant of the Gauss-Bonnet formula, compact anti de Sitter spaces do not exist in even dimensions \cite[Corollary 2.10]{Kulkarni}. On the other hand, the Calabi--Markus phenomenon eliminates also the de Sitter case. 
\subsubsection*{The decomposable cases} It remains to treat the following list of decomposable Lorentz symmetric spaces:
\[\R\times \mathbb{H}^3, \ \ \R\times \mathbb{S}^3, \ \ \R^{1,1}\times \mathbb{H}^2, \ \ \R^{1,1}\times \mathbb{S}^2, \ \ \widetilde{\dS}^{1,1}\times \mathbb{R}^2, \ \ \widetilde{\dS}^{1,1}\times \mathbb{H}^2, \ \ \widetilde{\dS}^{1,1}\times \mathbb{S}^2,\] 

\[ \widetilde{\AdS}^{1,2}\times \R, \ \  \dS^{1,2}\times \R, \ \ \operatorname{CW}_h\times \R, \ \ \operatorname{CW}_e\times \R \]
\\
where $\operatorname{CW}_h$ and $\operatorname{CW}_e$ denote the (indecomposable) Cahen--Wallach spaces of \textit{hyperbolic} and \textit{elliptic} types respectively (we note that $\AdS^{1,1}$ and $\dS^{1,1}$ are the same homogeneous space, they are both equal to $\SO(1,2)/\SO(1,1)$). A ``classification'' of all compact quotients of each space is achievable. When the maximal flat Lorentz factor is non-trivial we have the following result on the ``rationality'' of the flat leaves. 

\begin{pr}\label{rationality}
    Let $X$ be either $\R^{1,1}\times \mathbb{H}^2$ or $\R\times \mathbb{H}^3$ and $M$ be a compact manifold locally modeled on $X$. Then the flat leaves of $M$ are closed.
\end{pr}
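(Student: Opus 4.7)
The plan is to show that $\widehat{\Gamma}\mathrel{\mathop:}=p_2(\Gamma)\subset G\mathrel{\mathop:}=\Isom(Y)$ is a discrete subgroup of $G$, where $\Gamma=\pi_1(M)$; once this is established, closedness of the flat leaves is immediate, since the leaf through $[x_0,y_0]\in M$ has preimage $X^L\times(\widehat{\Gamma}\cdot y_0)$ in $\widetilde{M}=X^L\times Y$, and a discrete subgroup of $G$ acts on $Y$ with discrete (hence closed) orbits. The identification $\widetilde{M}\cong X^L\times Y$ together with the inclusion $\Gamma\subset\Isom(X^L)\times G$ is furnished by Theorem~\ref{Intro: the completeness thm} combined with Proposition~\ref{splitting isometry}.

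I would argue by contradiction: suppose $\widehat{\Gamma}$ is non-discrete, and let $H\mathrel{\mathop:}=\overline{\widehat{\Gamma}}\subset G$. Then $H$ acts cocompactly on $Y$, since the compact manifold $M$ surjects continuously onto the Hausdorff quotient $H\backslash Y$, and by Auslander (Fact~\ref{fact: auslander}) the identity component $H^o$ is a non-trivial connected solvable Lie subgroup of the rank-one simple group $G=\Isom(\mathbb{H}^n)$. The classical structure theory of such subgroups (via fixed-point behaviour on the ideal boundary $\partial\mathbb{H}^n$) leaves three possibilities for $H^o$: a parabolic type fixing a unique ideal point, a hyperbolic type fixing a pair of ideal points, or a compact toral type fixing an interior point of $\mathbb{H}^n$. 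The toral case is eliminated for $n=2$ (the normalizer of a maximal torus in $\Isom(\mathbb{H}^2)$ is compact, while $H$ is not) and reduces to the geodesic-stabilizer case for $n=3$; in every surviving scenario, after passing to a finite-index subgroup of $\Gamma$, one has $\widehat{\Gamma}\subset N_G(H^o)\subset P_\xi$ for some parabolic subgroup $P_\xi\subset G$.

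Consequently $\Gamma$ lies in the amenable group $\Isom(X^L)\times P_\xi$, so $\Gamma$ is polycyclic by \cite[Lemma 2.2]{milnor1977fundamental}, and \cite[Part 1.6]{fried1983three} furnishes a syndetic hull $S\subset\Isom(X^L)\times P_\xi$ for a finite-index subgroup of $\Gamma$: a connected Lie subgroup containing $\Gamma$ as a cocompact lattice, which forces $S$ to be unimodular. Since $S$ inherits a proper, cocompact action on the contractible manifold $X^L\times Y$, the Mostow-type argument of \cite[Appendix A]{hanounah2025topology} forces the $S$-action to be transitive.

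The main step, and the one I expect to be the principal obstacle, is now an adaptation of Lemma~\ref{non-unimodularity lemma}: any connected Lie subgroup $S\subset\Isom(X^L)\times P_\xi$ acting transitively on $X^L\times\mathbb{H}^n$ must be non-unimodular. The mechanism mirrors that of Lemma~\ref{non-unimodularity lemma}: writing the Langlands-type decomposition $P_\xi\cong(M\times A)\ltimes N$ with $A\cong\R$ acting on the abelian $N\cong\R^{n-1}$ by non-trivial homotheties, the projection $\pi:\Isom(X^L)\times P_\xi\to A$ must be surjective when restricted to $S$ (transitivity on $\mathbb{H}^n$ forces the $A$-Busemann depth to be realized), and transitivity further forces $\mathfrak{s}\cap\mathfrak{n}\neq\{0\}$, for otherwise the projection of $\mathfrak{s}$ to the unimodular quotient $\mathfrak{l}\oplus\mathfrak{m}\oplus\mathfrak{a}$ would be a Lie algebra isomorphism whose inverse section is forced by the cocycle conditions to contain the isotropy subalgebra, contradicting transitivity. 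Since $\Isom(X^L)$, $M$ and $N$ are each unimodular while $A$ scales $\mathfrak{s}\cap\mathfrak{n}$ by a non-trivial character, one computes $\mathrm{tr}(\ad_X|_\mathfrak{s})\neq 0$ for any $X\in\mathfrak{s}$ with $\pi(X)\neq 0$, contradicting the unimodularity of $S$ and concluding the proof that $\widehat{\Gamma}$ is discrete.
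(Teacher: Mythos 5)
Your skeleton coincides with the paper's: reduce closedness of the flat leaves to discreteness of the projection of $\Gamma$ to $\Isom(\mathbb{H}^n)$, apply Auslander (Fact~\ref{fact: auslander}) to make the identity component $H^o$ of the closure solvable, analyse $H^o$ through its normalizer, and kill the non-discrete case with a syndetic hull whose unimodularity is contradicted by transitivity. Where you genuinely diverge is the endgame. The paper treats the two model spaces asymmetrically: for $\R\times \mathbb{H}^3$ it invokes Lemma~\ref{fact: normalizer} (amenability of the normalizer of a non-compact connected amenable subgroup of $\O(1,n)$), then a dimension count showing that $p(S)$ would be a $3$-dimensional solvable \emph{unimodular} group acting simply transitively on $\mathbb{H}^3$, which Milnor's theorem \cite{Milnor-LIM} forbids; for $\R^{1,1}\times \mathbb{H}^2$ it pins down $N(H^o)$ as virtually $\Aff^+(\R)$ and leaves the non-existence of a $4$-dimensional unimodular transitive $S\subset \Sol\times\Aff(\R)$ as a check. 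You instead run a uniform argument: funnel every non-trivial $H^o$, via the fixed-point trichotomy on $\partial\mathbb{H}^n$, into a parabolic $P_\xi$ (noting, as you should make explicit, that in the elliptic and axial cases $N_G(H^o)$ is not even cocompact, so those branches die immediately), and then adapt the trace computation of Lemma~\ref{non-unimodularity lemma} to $\Isom(X^L)\times P_\xi$. This buys uniformity and makes the paper's ``one checks'' explicit; the paper's route buys a cleaner conceptual input (Milnor) in the $\mathbb{H}^3$ case.

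One step of yours is thinner than it needs to be. You conclude $\tr(\ad_X|_{\mathfrak{s}})\neq 0$ ``since $\Isom(X^L)$, $M$ and $N$ are each unimodular'', but unimodularity of $\Isom(\R^{1,1})$ does not make $\ad_X$ traceless on the proper sub-ideal $\mathfrak{s}\cap\sol$: the adjoint weights of a boost on $\sol=\so(1,1)\ltimes\R^2$ are $0,+\lambda,-\lambda$, so a one-dimensional intersection along a single null translation direction would contribute $\pm\lambda$ and could a priori cancel the positive contribution coming from $\mathfrak{n}$. The repair uses the properness you already have: when $\dim S=\dim(X^L\times\mathbb{H}^n)$ the stabilizers are discrete, so $\mathfrak{s}\cap\sol$ contains no boost-type element (such an element fixes a point of $\R^{1,1}$ and acts trivially on $\mathbb{H}^2$, producing a positive-dimensional stabilizer); hence $\mathfrak{s}\cap\sol$ is the full translation plane, on which the trace vanishes, and the filtration by the ideals $\mathfrak{s}\cap\sol\subset\mathfrak{s}\cap(\sol\oplus\mathfrak{n})\subset\mathfrak{s}$ gives $\tr(\ad_X|_{\mathfrak{s}})=\beta\dim\bigl((\mathfrak{s}\cap(\sol\oplus\mathfrak{n}))/(\mathfrak{s}\cap\sol)\bigr)>0$ for $X$ with non-trivial $\mathfrak{a}$-component $\beta$. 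This filtration also replaces your under-justified claim that $\mathfrak{s}\cap\mathfrak{n}\neq\{0\}$, which is not literally needed. With these points filled in, the argument is sound.
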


\begin{lemma}[\cite{goldman1984fundamental}]\label{fact: normalizer}
Let $G$ be a non-compact connected amenable subgroup of $\O(1,n)$. Then the normalizer of $G$ is amenable.
\end{lemma}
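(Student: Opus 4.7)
The plan is to analyze the action of $G \subset \O(1,n)$ on real hyperbolic space $\mathbb{H}^n$ and on its ideal boundary $\partial \mathbb{H}^n \cong S^{n-1}$, on which $\O(1,n)$ acts conformally. Replacing $G$ by its closure (which remains connected, and whose amenability as a locally compact group is inherited from $G$), we may assume $G$ is a closed connected amenable Lie subgroup; this is harmless since $N_{\O(1,n)}(G) \subseteq N_{\O(1,n)}(\overline{G})$. The goal is to pin down a canonical finite invariant set $F \subset \partial \mathbb{H}^n$ associated with $G$ which the normalizer must also preserve, and then conclude from the fact that the stabilizer of such a set is amenable.

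First, $G$ has no fixed point in the interior $\mathbb{H}^n$: point stabilizers in $\O(1,n)$ are conjugate to $\O(n)$ and hence compact, so a fixed point would force $G$ to be compact. Next, since $G$ is amenable and acts continuously on the compact space $\partial \mathbb{H}^n$, averaging yields a $G$-invariant probability measure on $\partial \mathbb{H}^n$. A standard fact --- essentially the classification of elementary isometric actions on a negatively curved symmetric space, or equivalently of connected amenable subgroups of $\O(1,n)$ --- guarantees that an amenable isometric action on $\mathbb{H}^n$ without an interior fixed point must pointwise fix either a single point or a pair of points in $\partial \mathbb{H}^n$. Let $F := \{\xi \in \partial \mathbb{H}^n : g\xi = \xi \text{ for every } g \in G\}$. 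Then $1 \le |F| \le 2$: the lower bound was just noted, and for the upper bound, any three distinct boundary points span the boundary of a totally geodesic copy $\mathbb{H}^2 \subset \mathbb{H}^n$, and since an isometry of $\mathbb{H}^2$ fixing three boundary points must be the identity on that $\mathbb{H}^2$, every element of $G$ would fix an interior point, contradicting the first observation.

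The normalizer preserves $F$ setwise: for $\eta \in N_{\O(1,n)}(G)$, $\xi \in F$, and any $g \in G$, one has $g(\eta\xi) = \eta (\eta^{-1} g \eta)\xi = \eta \xi$ because $\eta^{-1} g \eta \in G$, so $\eta \xi \in F$. Consequently $N_{\O(1,n)}(G) \subseteq \Stab_{\O(1,n)}(F)$. When $|F|=1$, this setwise stabilizer is a maximal parabolic subgroup of $\O(1,n)$, isomorphic to $(\R^{\times} \times \O(n-1)) \ltimes \R^{n-1}$, which is solvable-by-compact and therefore amenable; when $|F|=2$, it preserves the geodesic joining the two fixed points and is isomorphic to $(\O(1,1) \times \O(n-1)) \rtimes \Z/2$, again amenable. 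In either case $N_{\O(1,n)}(G)$ is contained in an amenable group, hence itself amenable.

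The main obstacle is the intermediate claim that $G$ genuinely fixes at least one point at infinity, and not merely supports an invariant probability measure on $\partial \mathbb{H}^n$. The cleanest route is to invoke a rigidity statement for invariant measures: any $G$-invariant probability measure on the sphere at infinity for an isometric action on $\mathbb{H}^n$ without interior fixed point is supported on a single point or on a pair of antipodal points, a consequence of the negative curvature and rank one of $\mathbb{H}^n$. Alternatively one can bypass the measure-theoretic step entirely by appealing directly to the Tits-style classification of connected amenable subgroups of $\O(1,n)$ into elliptic (compact), parabolic (fixing a unique point at infinity), and hyperbolic (preserving a geodesic, i.e.\ fixing a pair of points at infinity) types; the non-compactness of $G$ excludes the elliptic case and the remaining two cases directly provide the set $F$.
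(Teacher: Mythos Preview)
The paper does not supply its own proof of this lemma; it is quoted as a fact from \cite{goldman1984fundamental}. Your argument via the fixed set on $\partial\mathbb{H}^n$ is the standard one and is correct once $G$ is closed --- which is the only situation the paper actually uses (in the proof of Proposition~\ref{rationality} the group in question is $H^o$, the identity component of a closure).

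There is, however, a real gap in your reduction to the closed case. You pass from $G$ to $\overline G$ and record that connectedness and amenability are inherited, but you silently also need non-compactness of $\overline G$, and that does \emph{not} follow. For $n\ge 6$, let $G$ be an irrational one-parameter subgroup of a maximal torus $T^2\subset\SO(4)\subset\O(n)\subset\O(1,n)$. Then $G$ is connected, abelian (hence amenable), and non-compact (it is not closed, and compact subsets of a Hausdorff group are closed), yet $\overline G=T^2$ is compact. The centralizer of $T^2$ in $\O(1,n)$ contains the copy of $\O(1,n-4)$ acting on the orthogonal complement of that $\R^4$, and this centralizer already centralizes $G$; hence $N(G)\supset\O(1,n-4)$, which is not amenable. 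So the lemma as literally stated can fail, and no argument that begins by replacing $G$ with its closure can repair it. The correct reading is to take $G$ closed (equivalently, not relatively compact); your closure step then becomes vacuous and the remainder of your proof goes through. One cosmetic remark: when $|F|=2$ the setwise stabilizer is exactly $\O(1,1)\times\O(n-1)$ --- the swap of the two null lines already lives in $\O(1,1)$ --- so the extra $\rtimes\Z/2$ is redundant.
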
 
\begin{proof}[Proof of Proposition \ref{rationality}]
    Let $(D,\rho)$ be a developing pair for $M$ and put $\Gamma\mathrel{\mathop:}=\rho(\pi_1(M))$.\\
    \textbf{The case where $X=\R\times \mathbb{H}^3$.} In this case $\Gamma$ is a cocompact lattice of $\R\times \SO(1,3)$. So, $\Gamma$ intersects the amenable radical of $\R\times \SO(1,3)$, which is the $\R$-factor, in a lattice, \cite[Lemma 2.1]{gelander2020minimal}.\\
    \textbf{The case when $X=\R^{1,1}\times \mathbb{H}^2$.}
Let $\Gamma\subset G\mathrel{\mathop:}=\Isom(\R^{1,1})\times \Isom(\mathbb{H}^2)$ be a discrete group that acts properly and cocompactly on $X$. The identity component $H^o$ of $H\mathrel{\mathop:}=\overline{p(\Gamma)}$ is solvable by Fact \ref{fact: auslander} where $p:G\to \Isom(\mathbb{H}^2)=\SO(1,2)$ is the projection. If $H^o$ is trivial then by Lemma \ref{useful lemma} we have that $\Gamma\cap \Isom(\R^{1,1})$ acts cocompactly on $\R^{1,1}$ and we are done. If $H^o$ is not trivial then the identity component of the normalizer $N(H^o)$ isomorphic to $\Aff^+(\R)$. Indeed, if $H^o$ is a hyperbolic (or elliptic) one-parameter group then, up to finite index, $N(H^o)$ is $H^o$ itself which is not cocompact in $\SO(1,2)$. If $H^o$ is parabolic then its normalizer is, up to finite index, a copy of $\Aff^+(\R)$. It follows that (virtually) $\Gamma\subset G'\mathrel{\mathop:}=\Sol\times \Aff(\R)$ where $\Sol=\SO(1,1)\ltimes \R^2$. This a contradiction using Proposition \ref{Prop: modular distorsion} point (2), since this would imply that $\Aff^+(\R)$  is unimodular.
\end{proof}

We conclude that if $M$ is a compact connected $4$-manifold locally modeled on $\R\times \mathbb{H}^3$ (resp. on $\R^{1,1}\times \mathbb{H}^2$), then $M$ is (geometrically) an $\mathbb{S}^1$-bundle over a closed hyperbolic $3$-manifold (resp. a $\mathbb{T}^2$-bundle over a closed hyperbolic surface).
\appendix
\section{The lightlike Poincar\'e group}\label{lightlike poincare}

Let $\langle \cdot , \cdot \rangle$ denote the Lorentz inner product on $\R^{n+2}$, with coordinates $(v,x_1,x_2,\ldots, x_n,u)$, whose quadratic form equals $2dvdu+\sum x_i^2$. Let $L_n\subset \O(1,n+1)$ be the subgroup of Lorentz linear transformations leaving invariant the lightlike hyperplane $P\mathrel{\mathop:}=\{u=0\}$. An element $B\in L_n$ has the form $$B=\begin{pmatrix}
    \lambda & \beta^{\top} & a\\
    0 & A & \alpha\\
    0 & 0 &\lambda^{-1} 
\end{pmatrix}$$
where $\lambda\in \R^*$, $a\in \R$,  $\alpha, \beta\in \R^n$, and $A\in \O(n)$. For $X\in \R^{n+2}$ we have

$$BX=\begin{pmatrix}
    \lambda & \beta^{\top} & a\\
    0 & A & \alpha\\
    0 & 0 &\lambda^{-1} 
\end{pmatrix}\begin{pmatrix}
    v\\
    \delta\\
    u 
\end{pmatrix}=\begin{pmatrix}
    \lambda v+\beta^{\top}\delta+au\\
    A\delta+u\alpha\\
    \lambda^{-1}u 
\end{pmatrix}$$
where $\delta\in \R^n$ satisfies $X=(v,\delta,u)^{\top}$. A straightforward computation shows that $$\langle BX, BX\rangle = 2vu+\left \|\delta\right \|^2+2u\left(\lambda^{-1}\beta^{\top}\delta+(A\delta)^{\top}\alpha\right)+u^2(2\lambda^{-1}a+\left \|\alpha\right \|^2)=2vu+\left \|\delta\right \|^2$$
for all $v$, $u$ and $\delta$. This shows that $$\left(\lambda^{-1}\beta^{\top}\delta+(A\delta)^{\top}\alpha\right)=\left(2\lambda^{-1}a+\left \|\alpha\right \|^2\right)=0$$
for all $\delta\in \R^n$. But, $$\left(\lambda^{-1}\beta^{\top}\delta+(A\delta)^{\top}\alpha\right)=\left(\lambda^{-1}\beta^{\top}\delta+(A^{-1}\alpha)^{\top}\delta\right)=\left(\lambda^{-1}\beta+A^{-1}\alpha\right)^{\top}\delta.$$
We obtain that $\alpha=-\lambda^{-1}A\beta$ and $a=-\frac{\lambda^{-1}}{2}\left \|\beta\right \|^2$. In other words, the elements of $L_n$ are
$$L_n=\left\{\begin{pmatrix}
    \lambda & \beta^{\top} & -\frac{\lambda^{-1}}{2}\left \|\beta\right \|^2\\
    0 & A & -\lambda^{-1}A\beta\\
    0 & 0 &\lambda^{-1} 
\end{pmatrix}, \ \lambda\in \R^*, \ \beta\in \R^n, \ A\in \O(n) \right\}.$$

One verifies that $L_n\simeq \left(\R^*\times \O(n)\right)\ltimes \R^n\simeq \Sim(\R^n)$, where $\Sim(\R^n)$ denotes the affine similarity group of $\R^n$. We denote by $L_n^+$ the index-two subgroup of $L_n$ with $\lambda>0$.\\

The \textit{lightlike Poincar\'e group} $\operatorname{LP}(n)$ is the subgroup $\operatorname{LP}(n)\mathrel{\mathop:}=L_n^+\ltimes P$ of the Poincar\'e group $\O(1,n+1)\ltimes \R^{n+2}$. We can rewrite $\operatorname{LP}(n)$ as
$$\operatorname{LP}(n)=\left(\left(\R\times \O(n)\right)\ltimes \R^n\right)\ltimes\R^{n+1}=\left(\R\times\O(n)\right)\ltimes (\R^{n}\ltimes\R^{n+1})=\left(\R\times\O(n)\right)\ltimes \Heis_{2n+1}$$
where $\left(\R\times\O(n)\right)$ acts on $\Heis_{2n+1}$ as follows: we fix a basis of the Heisenberg algebra given by $Z_0,X_1,\cdots,X_n,Y_1,\cdots, Y_n$ where the only non-vanishing brackets are $[X_i,Y_i]=Z_0$. The abelian algebra generated by $X_1,\cdots,X_n$ gives the nilradical of the  group $L_n$. The the $\R$-action is the exponential of the following derivation written in the above basis:
\begin{equation*}
    \begin{pmatrix}
        \ln(\lambda) & 0&0\\
        0&\ln(\lambda) \Id_n& 0\\
        0&0&0
    \end{pmatrix}.
\end{equation*}
On the other hand the an element of $\O(n)$-factor is given by the exponential of  the following derivation
\begin{equation*}
    \begin{pmatrix}
        0 & 0&0\\
        0&A& 0\\
        0&0&A
    \end{pmatrix},
\end{equation*}
where $A$ is an anti symmetric matrix.
\subsection{Action on half-Minkowski}\label{homogeneous half Minkowski} Let $C_n$ denote the half-Minkowski space, defined as $C_n\mathrel{\mathop:}=\{u>0\}$. The group $\operatorname{LP}(n)$ acts transitively on $C_n$. Let $I_n\subset \operatorname{LP}(n)$ be the stabilizer inside $\operatorname{LP}(n)$ of the vector $\operatorname{u}=(0,0,\ldots,0,1)\in C_n$. An element $E\in I_n$ has the form
$$E=\begin{pmatrix}
    \lambda & \beta^{\top} & -\frac{\lambda^{-1}}{2}\left \|\beta\right \|^2\\
    0 & A & -\lambda^{-1}A\beta\\
    0 & 0 &\lambda^{-1} 
\end{pmatrix}+\begin{pmatrix}
    v \\
    \delta \\
    0 
\end{pmatrix}.$$
One checks that $\lambda=1$, $v=\frac{\left \|\beta\right \|^2}{2}$, and $\delta=A\beta$. In other words, 
$$E=\begin{pmatrix}
    1 & \beta^{\top} & -\frac{\left \|\beta\right \|^2}{2}\\
    0 & A & -A\beta\\
    0 & 0 &1 
\end{pmatrix}+\begin{pmatrix}
    \frac{\left \|\beta\right \|^2}{2} \\
    A\beta \\
    0 
\end{pmatrix}.$$
Hence, $I_n=\O(n)\ltimes J_n\subset \left(\R\times\O(n)\right)\ltimes \Heis_{2n+1}$ where $J_n\cong \R^n$ is an $\O(n)$-invariant subgroup of $\Heis_{2n+1}$ that does not intersect the center of $\Heis_{2n+1}$. So $$C_n=\left(\left(\R\times\O(n)\right)\ltimes \Heis_{2n+1}\right)/(\O(n)\ltimes J_n).$$

\vspace{0.5cm}

\paragraph{\textbf{Competing interests:}} The authors have no competing interests to declare that are relevant to the content of this article. \\\textbf{Data availibility} Data sharing is not applicable to this article as no new data were created or analyzed in this study. 
\addtocontents{toc}{\protect\setcounter{tocdepth}{1}}
\bibliographystyle{alpha}
\bibliography{Bibliography.bib}

@article{carriere1989autour,
  title={Autour de la conjecture de {L}. {M}arkus sur les vari{\'e}t{\'e}s affines},
  author={Carri{\`e}re, Yves},
  journal={Inventiones mathematicae},
  volume={95},
  number={3},
  pages={615--628},
  year={1989},
  publisher={Springer}
}

@article{gelander2020minimal,
  title={On the Minimal Size of a Generating Set of Lattices in Lie Groups},
  author={Gelander, Tsachik and Slutsky, Raz},
  journal={Journal of Lie Theory},
  volume={30},
  number={1},
  pages={033--040},
  year={2020}
}

@article{borel1960density,
  title={Density properties for certain subgroups of semi-simple groups without compact components},
  author={Borel, Armand},
  journal={Annals of Mathematics},
  volume={72},
  number={1},
  pages={179--188},
  year={1960},
  publisher={JSTOR}
}

@article{mehidi2026completeness,
 author = {Mehidi, Lilia and Zeghib, Abdelghani},
 title = {On completeness and dynamics of compact {Brinkmann} spacetimes},
 fjournal = {Journal of Differential Geometry},
 journal = {J. Differ. Geom.},
 issn = {0022-040X},
 volume = {132},
 number = {2},
 pages = {383--427},
 year = {2026},
 language = {English},
 doi = {10.4310/jdg/1767815827},
 keywords = {53C50,53B30},
 url = {projecteuclid.org/journals/journal-of-differential-geometry/volume-132/issue-2/On-Completeness-and-dynamics-of-compact-Brinkmann-spacetimes/10.4310/jdg/1767815827.full},
 zbMATH = {8154467}
}

@article{hanounah2025homogeneous,
  title={On homogeneous plane waves},
  author={Hanounah, Malek and Mehidi, Lilia and Zeghib, Abdelghani},
  journal={Journal of Mathematical Physics},
  volume={66},
  number={5},
  year={2025},
  publisher={AIP Publishing}
}

@article{zeghib1998closed,
  title={On closed anti de Sitter spacetimes},
  author={Zeghib, Abdelghani},
  journal={Mathematische Annalen},
  volume={310},
  number={4},
  pages={695--716},
  year={1998},
  publisher={Springer}
}

@article{klingler1996completude,
  title={Compl{\'e}tude des vari{\'e}t{\'e}s {L}orentziennes {\`a} courbure constante},
  author={Klingler, Bruno},
  journal={Mathematische Annalen},
  volume={306},
  number={2},
  pages={353--370},
  year={1996},
  publisher={Berlin [etc.] J. Springer [etc.]}
}

@book{goldman2022geometric,
  title={Geometric structures on manifolds},
  author={Goldman, William M},
  volume={227},
  year={2022},
  publisher={American Mathematical Society}
}

@article{leistner2016completeness,
  title={Completeness of compact {L}orentzian manifolds with abelian holonomy},
  author={Leistner, Thomas and Schliebner, Daniel},
  journal={Mathematische Annalen},
  volume={364},
  number={3-4},
  pages={1469--1503},
  year={2016},
  publisher={Springer}
}

@article{mehidi2022completeness,
  title={On completeness and dynamics of compact {B}rinkmann spacetimes},
  author={Mehidi, Lilia and Zeghib, Abdelghani},
  journal={arXiv preprint arXiv:2205.07243},
  year={2022}
}

@article{romero1995completeness,
  title={Completeness of compact {L}orentz manifolds admitting a timelike conformal {K}illing vector field},
  author={Romero, Alfonso and S{\'a}nchez, Miguel},
  journal={Proceedings of the American Mathematical Society},
  volume={123},
  number={9},
  pages={2831--2833},
  year={1995}
}

@article{marsden1973completeness,
  title={On completeness of homogeneous pseudo-{R}iemannian manifolds},
  author={Marsden, Jerrold},
  journal={Ind. Univ. Math. J},
  volume={22},
  pages={1065--1066},
  year={1973}
}

@article {GuediriLafontaine,
    AUTHOR = {Guediri, M. and Lafontaine, J.},
     TITLE = {Sur la compl\'{e}tude des vari\'{e}t\'{e}s
              pseudo-riemanniennes},
   JOURNAL = {J. Geom. Phys.},
  FJOURNAL = {Journal of Geometry and Physics},
    VOLUME = {15},
      YEAR = {1995},
    NUMBER = {2},
     PAGES = {150--158},
      ISSN = {0393-0440,1879-1662},
   MRCLASS = {53C50 (53C22)},
  MRNUMBER = {1310948},
MRREVIEWER = {Alfonso\ Romero},
}

@article{inaba1993tangentially,
  title={Tangentially affine foliations and leafwise affine functions on the torus},
  author={Inaba, Takashi and Masuda, Kazuo},
  journal={Kodai Mathematical Journal},
  volume={16},
  number={1},
  pages={32--43},
  year={1993},
  publisher={Department of Mathematics, Tokyo Institute of Technology}
}

@article{carriere1984flots,
  title={Flots riemanniens},
  author={Carri{\`e}re, Yves},
  journal={Ast{\'e}risque},
  volume={116},
  pages={31--52},
  year={1984}
}

@article {CahenWallach,
    AUTHOR = {Cahen, M. and Wallach, N.},
     TITLE = {Lorentzian symmetric spaces},
   JOURNAL = {Bull. Amer. Math. Soc.},
  FJOURNAL = {Bulletin of the American Mathematical Society},
    VOLUME = {76},
      YEAR = {1970},
     PAGES = {585--591},
      ISSN = {0002-9904},
   MRCLASS = {53.78 (83.00)},
  MRNUMBER = {267500},
MRREVIEWER = {A.\ Avez},
}

@article {blumenthal,
    AUTHOR = {Blumenthal, Robert A.},
     TITLE = {Transversely homogeneous foliations},
   JOURNAL = {Ann. Inst. Fourier (Grenoble)},
  FJOURNAL = {Universit\'e{} de Grenoble. Annales de l'Institut Fourier},
    VOLUME = {29},
      YEAR = {1979},
    NUMBER = {4},
     PAGES = {vii, 143--158},
      ISSN = {0373-0956,1777-5310},
   MRCLASS = {57R30},
  MRNUMBER = {558593},
MRREVIEWER = {Bruce\ L.\ Reinhart},
}

@article {francescoarse,
    AUTHOR = {Frances, Charles},
     TITLE = {Isometry group of {L}orentz manifolds: a coarse perspective},
   JOURNAL = {Geom. Funct. Anal.},
  FJOURNAL = {Geometric and Functional Analysis},
    VOLUME = {31},
      YEAR = {2021},
    NUMBER = {5},
     PAGES = {1095--1159},
      ISSN = {1016-443X,1420-8970},
   MRCLASS = {57S15 (22F50 53C50)},
  MRNUMBER = {4356700},
MRREVIEWER = {Leonardo\ Biliotti},
}

@article {Leistnercompletness,
    AUTHOR = {Leistner, Thomas and Munn, Thomas},
     TITLE = {Completeness of certain compact {L}orentzian locally symmetric
              spaces},
   JOURNAL = {C. R. Math. Acad. Sci. Paris},
  FJOURNAL = {Comptes Rendus Math\'ematique. Acad\'emie des Sciences. Paris},
    VOLUME = {361},
      YEAR = {2023},
     PAGES = {819--824},
      ISSN = {1631-073X,1778-3569},
   MRCLASS = {53C50 (53C35)},
  MRNUMBER = {4588257},
MRREVIEWER = {J.\ Carlos\ D\'iaz-Ramos},
}

@article{fried1983three,
  title={Three-dimensional affine crystallographic groups},
  author={Fried, David and Goldman, William M},
  journal={Advances in Mathematics},
  volume={47},
  number={1},
  pages={1--49},
  year={1983},
  publisher={Elsevier}
}

@article {calabimarkus,
    AUTHOR = {Calabi, E. and Markus, L.},
     TITLE = {Relativistic space forms},
   JOURNAL = {Ann. of Math. (2)},
  FJOURNAL = {Annals of Mathematics. Second Series},
    VOLUME = {75},
      YEAR = {1962},
     PAGES = {63--76},
      ISSN = {0003-486X},
   MRCLASS = {53.78},
  MRNUMBER = {133789},
MRREVIEWER = {J. A. Wolf},
}

@article {Mess,
    AUTHOR = {Mess, Geoffrey},
     TITLE = {Lorentz spacetimes of constant curvature},
   JOURNAL = {Geom. Dedicata},
  FJOURNAL = {Geometriae Dedicata},
    VOLUME = {126},
      YEAR = {2007},
     PAGES = {3--45},
      ISSN = {0046-5755,1572-9168},
   MRCLASS = {53C50},
  MRNUMBER = {2328921},
MRREVIEWER = {Pierre Mounoud},
}

@article {goldmannonstandard,
    AUTHOR = {Goldman, William M.},
     TITLE = {Nonstandard {L}orentz space forms},
   JOURNAL = {J. Differential Geom.},
  FJOURNAL = {Journal of Differential Geometry},
    VOLUME = {21},
      YEAR = {1985},
    NUMBER = {2},
     PAGES = {301--308},
      ISSN = {0022-040X,1945-743X},
   MRCLASS = {53C30 (53C50 57S30)},
  MRNUMBER = {816674},
MRREVIEWER = {Kyung Bai Lee},
}

@article {salein,
    AUTHOR = {Salein, Francois},
     TITLE = {Vari\'et\'es anti-de {S}itter de dimension 3 exotiques},
   JOURNAL = {Ann. Inst. Fourier (Grenoble)},
  FJOURNAL = {Universit\'e{} de Grenoble. Annales de l'Institut Fourier},
    VOLUME = {50},
      YEAR = {2000},
    NUMBER = {1},
     PAGES = {257--284},
      ISSN = {0373-0956,1777-5310},
   MRCLASS = {57M50 (53C50 57S30)},
  MRNUMBER = {1762345},
MRREVIEWER = {William Goldman},
}

@article {zeghibclosed,
    AUTHOR = {Zeghib, Abdelghani},
     TITLE = {On closed anti-de {S}itter spacetimes},
   JOURNAL = {Math. Ann.},
  FJOURNAL = {Mathematische Annalen},
    VOLUME = {310},
      YEAR = {1998},
    NUMBER = {4},
     PAGES = {695--716},
      ISSN = {0025-5831,1432-1807},
   MRCLASS = {53C50},
  MRNUMBER = {1619748},
MRREVIEWER = {William Goldman},
}

@article {ghysAnosov,
    AUTHOR = {Ghys, \'Etienne},
     TITLE = {Flots d'{A}nosov dont les feuilletages stables sont
              diff\'erentiables},
   JOURNAL = {Ann. Sci. \'Ecole Norm. Sup. (4)},
  FJOURNAL = {Annales Scientifiques de l'\'Ecole Normale Sup\'erieure.
              Quatri\`eme S\'erie},
    VOLUME = {20},
      YEAR = {1987},
    NUMBER = {2},
     PAGES = {251--270},
      ISSN = {0012-9593},
   MRCLASS = {58F15},
  MRNUMBER = {911758},
MRREVIEWER = {Marlies  Gerber},
}

@article {KO,
    AUTHOR = {Kath, Ines and Olbrich, Martin},
     TITLE = {Compact quotients of {C}ahen-{W}allach spaces},
   JOURNAL = {Mem. Amer. Math. Soc.},
  FJOURNAL = {Memoirs of the American Mathematical Society},
    VOLUME = {262},
      YEAR = {2019},
    NUMBER = {1264},
     PAGES = {v+84},
      ISSN = {0065-9266,1947-6221},
      ISBN = {978-1-4704-4103-6; 978-1-4704-5501-9},
   MRCLASS = {53C50 (22E40 53C35 57S30)},
  MRNUMBER = {4044460},
MRREVIEWER = {Miguel S\'anchez},
}

@article {grunewaldMargulis,
    AUTHOR = {Grunewald, Fritz and Margulis, Gregori},
     TITLE = {Transitive and quasitransitive actions of affine groups
              preserving a generalized {L}orentz-structure},
   JOURNAL = {J. Geom. Phys.},
  FJOURNAL = {Journal of Geometry and Physics},
    VOLUME = {5},
      YEAR = {1988},
    NUMBER = {4},
     PAGES = {493--531},
      ISSN = {0393-0440},
   MRCLASS = {22E40 (53C50 57S30)},
  MRNUMBER = {1075720},
MRREVIEWER = {Yoshinobu\ Kamishima},
}

@article {Kulkarni,
    AUTHOR = {Kulkarni, Ravi S.},
     TITLE = {Proper actions and pseudo-{R}iemannian space forms},
   JOURNAL = {Adv. in Math.},
  FJOURNAL = {Advances in Mathematics},
    VOLUME = {40},
      YEAR = {1981},
    NUMBER = {1},
     PAGES = {10--51},
      ISSN = {0001-8708},
   MRCLASS = {53C30 (53C50 57S30)},
  MRNUMBER = {616159},
MRREVIEWER = {F.\ Raymond},
}

@article {auslander,
    AUTHOR = {Auslander, Louis},
     TITLE = {On radicals of discrete subgroups of {L}ie groups},
   JOURNAL = {Amer. J. Math.},
  FJOURNAL = {American Journal of Mathematics},
    VOLUME = {85},
      YEAR = {1963},
     PAGES = {145--150},
      ISSN = {0002-9327,1080-6377},
   MRCLASS = {22.50 (22.70)},
  MRNUMBER = {152607},
MRREVIEWER = {J.\ A.\ Wolf},
}

@article {Goldmanhirsch2,
    AUTHOR = {Goldman, William and Hirsch, Morris W.},
     TITLE = {The radiance obstruction and parallel forms on affine
              manifolds},
   JOURNAL = {Trans. Amer. Math. Soc.},
  FJOURNAL = {Transactions of the American Mathematical Society},
    VOLUME = {286},
      YEAR = {1984},
    NUMBER = {2},
     PAGES = {629--649},
      ISSN = {0002-9947,1088-6850},
   MRCLASS = {57R99 (53C20 55R25)},
  MRNUMBER = {760977},
MRREVIEWER = {Kyung\ Bai\ Lee},
}

@article{hanounah2025topology,
  title={Topology and dynamics of compact plane waves},
  author={Hanounah, Malek and Kath, Ines and Mehidi, Lilia and Zeghib, Abdelghani},
  journal={Journal f{\"u}r die reine und angewandte Mathematik (Crelles Journal)},
  year={2025},
  publisher={De Gruyter}
}

@article {kulkarniraymond,
    AUTHOR = {Kulkarni, Ravi S. and Raymond, Frank},
     TITLE = {{$3$}-dimensional {L}orentz space-forms and {S}eifert fiber
              spaces},
   JOURNAL = {J. Differential Geom.},
  FJOURNAL = {Journal of Differential Geometry},
    VOLUME = {21},
      YEAR = {1985},
    NUMBER = {2},
     PAGES = {231--268},
      ISSN = {0022-040X,1945-743X},
   MRCLASS = {53C50 (22E40 53C30 57N10 57S30)},
  MRNUMBER = {816671},
MRREVIEWER = {G.\ Peter\ Scott},
}

@article{hanounah2025completeness,
  title={On completeness of foliated structures, and null {K}illing fields},
  author={Hanounah, Malek and Mehidi, Lilia},
  journal={Mathematische Annalen},
  pages={1--40},
  year={2025},
  publisher={Springer}
}

@article {Wu,
    AUTHOR = {Wu, H.},
     TITLE = {On the de {R}ham decomposition theorem},
   JOURNAL = {Illinois J. Math.},
  FJOURNAL = {Illinois Journal of Mathematics},
    VOLUME = {8},
      YEAR = {1964},
     PAGES = {291--311},
      ISSN = {0019-2082},
   MRCLASS = {53.45 (53.50)},
  MRNUMBER = {161280},
MRREVIEWER = {K.\ Nomizu},
}

@article {deRham,
    AUTHOR = {de Rham, Georges},
     TITLE = {Sur la reductibilit\'e{} d'un espace de {R}iemann},
   JOURNAL = {Comment. Math. Helv.},
  FJOURNAL = {Commentarii Mathematici Helvetici},
    VOLUME = {26},
      YEAR = {1952},
     PAGES = {328--344},
      ISSN = {0010-2571,1420-8946},
   MRCLASS = {53.0X},
  MRNUMBER = {52177},
MRREVIEWER = {W.\ V. D. Hodge},
}

@article {DZ,
    AUTHOR = {Dumitrescu, Sorin and Zeghib, Abdelghani},
     TITLE = {G\'eom\'etries {L}orentziennes de dimension 3: classification et
              compl\'etude},
   JOURNAL = {Geom. Dedicata},
  FJOURNAL = {Geometriae Dedicata},
    VOLUME = {149},
      YEAR = {2010},
     PAGES = {243--273},
      ISSN = {0046-5755,1572-9168},
   MRCLASS = {53C50 (53C30)},
  MRNUMBER = {2737692},
MRREVIEWER = {Pierre\ Mounoud},
}

@article{goldman1984fundamental,
  title={The fundamental group of a compact flat {L}orentz space form is virtually polycyclic},
  author={Goldman, William M and Kamishima, Yoshinobu},
  journal={Journal of Differential Geometry},
  volume={19},
  number={1},
  pages={233--240},
  year={1984},
  publisher={Lehigh University}
}

@book{besse2007einstein,
  title={Einstein manifolds},
  author={Besse, Arthur L},
  year={2007},
  publisher={Springer}}

@article{gueritaud2015compact,
  title={Compact anti-de Sitter 3-manifolds and folded hyperbolic structures on surfaces},
  author={Gu{\'e}ritaud, Fran{{c}}ois and Kassel, Fanny and Wolff, Maxime},
  journal={Pacific Journal of Mathematics},
  volume={275},
  number={2},
  pages={325--359},
  year={2015},
  publisher={Mathematical Sciences Publishers}
}

\end{document}